\documentclass[12pt, a4paper]{amsart}

\usepackage{a4wide}
\usepackage{bbm}
\usepackage{amsmath}
\usepackage{amsfonts}
\usepackage{amssymb}
\usepackage{MnSymbol}

\usepackage{hyperref}
\usepackage{relsize}
\usepackage{scrextend}
\usepackage{lineno}
\usepackage{enumerate}
 \numberwithin{equation}{section}
 \usepackage[shortlabels]{enumitem}
\usepackage{color}
\usepackage{amsaddr}

\usepackage{amsmath}
\usepackage{amssymb, amsfonts,amscd,verbatim,hyperref,cleveref,graphics}
\usepackage{xspace}

\DeclareMathOperator{\real}{Re}

\newtheorem{thm}{Theorem}[section]
\newtheorem{prop}[thm]{Proposition}
\newtheorem{cor}[thm]{Corollary}
\newtheorem{lemma}[thm]{Lemma}

\newtheorem{defn}[thm]{Definition}

\numberwithin{equation}{section}

\theoremstyle{definition}
\newtheorem{rem}[thm]{Remark}
\newtheorem{ex}[thm]{Example}

\def\NN{{\mathbb N}}

\def\RR{{\mathbb R}}

\def\CC{{\mathbb C}}
\def\C{{\textnormal {C}}}
\def\UC{{\textnormal {UC}}}
\def\Lip{{\textnormal {Lip}}}

\def\UCB{{\textnormal {UCB}}}

\DeclareSymbolFont{bbold}{U}{bbold}{m}{n}
\DeclareSymbolFontAlphabet{\mathbbold}{bbold}

\newcommand{\zs}

\newcommand{\goesru}{\xrightarrow{ru}}

\makeindex

\begin{document}

\title[Generation of relatively uniformly continuous semigroups]{Generation of relatively uniformly continuous semigroups on vector lattices}

\author{M.~Kaplin}

\address{Institute for Mathematics, Physics, and Mechanics,
Jadranska 19,
SI-1000 Ljubljana,
Slovenija\\ and \\ Faculty of Mathematics and Physics,
University of Ljubljana,
Jadranska ulica 19,
SI-1000 Ljubljana,
Slovenia}
\email{michael.kaplin@fmf.uni-lj.si}

\author{M.~Kramar Fijav\v z}
\address{Faculty for Civil and Geodetical Engineering,
University of Ljubljana,
Jamova cesta 2,
SI-1000 Ljubljana,
Slovenia \\ and \\ Institute for Mathematics, Physics, and Mechanics,
Jadranska ulica 19,
SI-1000 Ljubljana,
Slovenia}
\email{marjeta.kramar@fgg.uni-lj.si}

\keywords{vector lattices, positive operator semigroups, relative uniform convergence, generator, Hille-Yosida theorem}
\subjclass[2010]{Primary: 47D06, 46A40, 47B65.}
\thanks{The authors acknowledge financial support from the Slovenian Research Agency, Grant No. P1-0222.}

\begin{abstract}

In this paper we prove a Hille-Yosida type theorem for relatively uniformly continuous positive semigroups on vector lattices.
We introduce the notions of relatively uniformly continuous, differentiable, and integrable functions on $\RR_+$.
These notions allow us to study the generators of relatively uniformly continuous semigroups. 
Our main result provides sufficient and necessary conditions for an operator to be the generator of an exponentially order bounded, relatively uniformly continuous, positive semigroup.
\end{abstract}

\maketitle
\section{Introduction}
The Hille-Yosida Theorem is a milestone in the theory of one-parameter semigroups of operators and was originally proved in 1948 by E. Hille in \cite{Hille:48} and  K. Yosida \cite{Yosida:48}, independently. It enables the identification of a strongly continuous operator semigroup on a Banach space through the resolvents of its generator. Our main goal  here is to prove a counterpart of the Hille-Yosida Theorem for relatively uniformly continuous positive semigroups.

The presented paper can be viewed as a companion paper to \cite{Kandic:18} where the notion of a relatively uniformly continuous semigroup on vector lattices is introduced. This notion is motivated by various examples such as the heat semigroup, the (left) translation semigroup, and Koopman semigroups on  the space of continuous functions on the real line $\C(\RR)$ and on its sublattices such as the space of uniformly continuous functions $\UC(\RR)$, Lipschitz continuous functions $\Lip(\RR)$, and continuous functions with compact support  $\C_c(\RR)$. 
It is shown that many basic results from the strongly continuous operator semigroup theory on Banach spaces can be translated in an analogous way to this setting and foundations are laid for further studies.
We build upon these results and focus on generation properties of such semigroups.

The paper is structured as follows. In Section 2 we recall some basic notions and facts about relative uniform convergence. In Section 3 we introduce the notions of relatively uniformly continuous, differentiable, and integrable functions. We develop the appropriate calculus fitting to this context and show
a version of the Fundamental Theorem of Calculus.  In Section 4 we use these concepts to study the generators of relatively uniformly continuous positive semigroups. There we introduce the notions of an ru-closed and ru-densely defined operator on a vector lattice and show that every generator of a relatively uniformly continuous positive semigroup is such. The proofs presented here have similarities to  the $C_0$-semigroup case, see e.g. \cite{Engel:00}, however, due to convergence with respect to a regulator, dealing with ru-continuous semigroups is more difficult. While the orbit maps of strongly continuous semigroups on Banach spaces grow at most exponentially in norm, relatively uniformly continuous semigroups a priori do not experience such a behavior. Hence, we introduce the notion of exponentially order bounded semigroups.
In Section 5  we prove that the resolvent of the generator of such a semigroup is its Laplace transform and that it satisfies a certain property related to the exponential growth of the semigroup. The rest of this section is devoted to the proof of our main result, \Cref{Hille-Yosida}, using the so called Yosida approximations. We conclude by showing that every exponentially order bounded, relatively uniformly continuous, positive semigroup is uniquelly determined by its generator, see \Cref{uniqueness}.

\section{Preliminaries}

A net $(x_\alpha)_{\alpha} $ in a vector lattice $X$ \emph{converges relatively uniformly to $x \in X$} if one can find a (non-unique) \emph{regulator} $u\in X$ such that for each $\varepsilon>0$ there exists $\alpha_0 $ such that
$$|x_\alpha-x|\leq \varepsilon \cdot u\quad\text{ holds for all }\alpha\geq \alpha_0.$$ 
In this case we write $x_\alpha \goesru x$ (with respect to $u$) and $\textup{ru}-\underset{\alpha}{\lim} \  x_\alpha :=x$.  We call $x$ the \emph{relative uniform limit} (or \emph{ru-limit}, for short) of $(x_\alpha)_{\alpha} $.

A vector lattice is said to be \emph{Archimedean} if for each $x,y \in X$ from
 $ 0 \leq n x \leq  y $ for all $n \in \NN$ it follows that $x=0$.
Throughout this paper we denote by $X$ an Archimedean vector lattice.

The following properties for relatively convergent nets in $X$ are easy to verify; see e.g. \cite[Theorem 16.2]{Luxemburg:71}.
\begin{lemma}\label{ru convergence}
 Let $X$ be an Archimedean vector lattice.
\begin{enumerate}
\item [\textup{(i)}] If $(x_\alpha)_{\alpha}$ converges relatively
uniformly to $x$ as well as to $y$, then $x= y$.
\item [\textup{(ii)}] If $x_\alpha \goesru x$ with respect to $u_x$, $y_\alpha \goesru y$ with respect to $u_y$ and $a,b \in \RR$, then
\begin{itemize}
\item $ a x_\alpha+b  y_\alpha \goesru a x+b y$  with respect to  $ |a| u_x+|b| u_y,$
\item $x_\alpha \vee y_\alpha \goesru  x\vee y  $ with respect to $ u_x+u_y,$
\item $x_\alpha \wedge y_\alpha \goesru x\wedge y$ with respect to $ u_x+u_y,$
\item $x^+_\alpha \goesru x^+$ with respect to $u_x$,
\item $|x_\alpha| \goesru |x|$ with respect to $ u_x$, and 
\item if $x_\alpha$ is positive for all $\alpha$, then  $x$ is positive.
\end{itemize}
\end{enumerate}
\end{lemma}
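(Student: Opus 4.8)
The plan is to reduce every assertion to a handful of elementary order inequalities valid in any vector lattice, namely the lattice triangle inequalities
$$|a \vee b - c \vee d| \leq |a-c| + |b-d|, \qquad |a \wedge b - c \wedge d| \leq |a-c| + |b-d|,$$
the contraction estimates $|a^+ - b^+| \leq |a-b|$ and $|a^- - b^-| \leq |a-b|$, and the reverse triangle inequality $\bigl||a| - |b|\bigr| \leq |a - b|$, together with the Archimedean property of $X$. Each of these is a standard consequence of the Riesz space axioms, so I would either cite them or dispatch them in a line (e.g. $a \vee b \leq (c \vee d) + (|a-c| \vee |b-d|)$ and its symmetric counterpart give the first one). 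The whole lemma is then a matter of substituting the defining estimates.

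For (i), suppose $x_\alpha \goesru x$ with respect to $u$ and $x_\alpha \goesru y$ with respect to $v$. Fix $\varepsilon > 0$. Since the index set is directed, I choose a single $\alpha_0$ beyond which $|x_\alpha - x| \leq \varepsilon u$ and $|x_\alpha - y| \leq \varepsilon v$ hold simultaneously. The ordinary triangle inequality gives $|x - y| \leq |x - x_\alpha| + |x_\alpha - y| \leq \varepsilon (u + v)$, and since the left-hand side is independent of $\alpha$, this holds for every $\varepsilon > 0$. Taking $\varepsilon = 1/n$ yields $0 \leq n\,|x - y| \leq u + v$ for all $n \in \NN$, so $|x - y| = 0$ by the Archimedean property, i.e. $x = y$.

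For the first three bullets of (ii) I would simply plug in the estimates, again selecting a common $\alpha_0$ for the two nets. The triangle inequality gives $|(ax_\alpha + by_\alpha) - (ax + by)| \leq |a|\,|x_\alpha - x| + |b|\,|y_\alpha - y| \leq \varepsilon\bigl(|a| u_x + |b| u_y\bigr)$, which is exactly ru-convergence with regulator $|a| u_x + |b| u_y$. The lattice triangle inequalities above give $|x_\alpha \vee y_\alpha - x \vee y| \leq |x_\alpha - x| + |y_\alpha - y| \leq \varepsilon(u_x + u_y)$ and likewise for $\wedge$, producing the regulator $u_x + u_y$. The remaining bullets are special cases: writing $x^+ = x \vee 0$ and using $|x_\alpha^+ - x^+| \leq |x_\alpha - x| \leq \varepsilon u_x$ handles the positive part with regulator $u_x$, and $\bigl||x_\alpha| - |x|\bigr| \leq |x_\alpha - x| \leq \varepsilon u_x$ handles the modulus. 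Finally, if every $x_\alpha \geq 0$, then $x_\alpha^- = 0$, so $0 \leq x^- = |x^- - x_\alpha^-| \leq |x - x_\alpha| \leq \varepsilon u_x$ for all $\varepsilon > 0$, and the Archimedean property forces $x^- = 0$, that is, $x \geq 0$.

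I anticipate no serious obstacle: the content is entirely bookkeeping with regulators. The only two points needing genuine care are the use of directedness to pass to a common $\alpha_0$ when combining two convergent nets, and the two invocations of the Archimedean property — in (i) and in the positivity bullet — where an inequality holding for all $\varepsilon > 0$ is upgraded to an identity.
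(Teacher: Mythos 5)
Your proof is correct and is the standard argument; the paper itself gives no proof of this lemma, simply calling the properties ``easy to verify'' and citing Luxemburg--Zaanen, Theorem 16.2. Your reduction to the Birkhoff-type lattice inequalities plus the two careful invocations of the Archimedean property (in (i) and in the positivity bullet) is exactly what that verification amounts to.
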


It is evident that relative uniform convergence implies order convergence  and, by \cite[Ch.1 Proposition 5.9]{Peressini:67}, the converse is true for sequences if the vector lattice is $\sigma$-order complete and has the diagonal property.

For vector lattices $X$ and $Y$ a map $T \colon X \rightarrow Y$ \emph{preserves ru-convergence} if for every $x_\alpha \goesru x$ in $X$ one has  $Tx_\alpha\goesru Tx$ in $Y$. By \cite[Theorem 5.1]{Taylor:19}, a linear operator  between Archimedean vector lattices preserves ru-convergence if and only if it is order bounded.
In particular, if $T: X \mapsto X$ is a positive operator and $x_\alpha \goesru x$ with respect to a regulator $u$, then $Tx_\alpha \goesru Tx$ with respect to regulator $Tu$.

A subset $S$ of $X$ is called \emph{relatively uniformly closed}
whenever $(x_n)_{n \in \NN} \subset S$ and $x_n \goesru x$ imply $x \in S$. By \cite[Section 3]{Luxemburg:67}, the relatively  uniformly closed sets are exactly the closed sets of a certain topology in $X$, the \emph{relative uniform topology} which we denote by $\tau_{ru}$. 
The relative uniform topology has been first studied by W.A.J. Luxemburg and L.C. Moore in \cite{Luxemburg:67}; see also \cite{Moore:68}. 
It is evident that relative uniform convergence implies convergence in the relative uniform topology.

\begin{ex}\label{topology examples}
 On a vector lattice $X$ with an order unit $u \in X$ the relative uniform topology $\tau_{ru}$ is generated by the norm $$\|x\|_{u}:= \inf \{ \lambda >0 \ \colon \ |x| \leq \lambda \cdot u \},$$
since $x_\alpha \goesru x$ if and only if $x_\alpha \xrightarrow{\|\cdot\|_{u}} x$.  
It is well-known that in a completely metrizable locally solid vector lattice $(X, \tau)$ every convergent sequence has a subsequence which converges relatively uniformly to the same limit, see \cite[Lemma 2.30]{Aliprantis:07}. This immediately yields that a subset of $X$ is relatively uniformly closed if and only if it is $\tau$-closed, so that topologies $\tau_{ru}$ and $\tau$ agree. 
 In particular, if $X$ is a Banach lattice, then $\tau_{ru}$ agrees with norm topology. For $0<p<1$ the vector lattice $L^p(\RR)$ equipped with the topology $\tau$ induced by the metric $$\qquad d_p(f_1,f_2):=\int_{\RR} |f_1(x)-f_2(x)|^p \ dx$$
is a completely metrizable, locally solid vector lattice which is not locally convex. Hence, the relative uniform topology need not be locally convex in general.
\end{ex}

As mentioned above, a linear operator is order bounded if and only if it preserves ru-convergence. For convergence in the relative uniform topology we have the following result.
\begin{lemma}
Let $X$ and $Y$ be  Archimedean vector lattices. 
If  a linear operator $T:X \rightarrow Y$ preserves ru-convergence, then $T \colon (X, \tau_{ru}) \rightarrow (Y, \tau_{ru})$ is continuous.
\end{lemma}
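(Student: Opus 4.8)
The plan is to verify continuity through the standard topological criterion that $T \colon (X,\tau_{ru}) \to (Y,\tau_{ru})$ is continuous if and only if $T^{-1}(C)$ is $\tau_{ru}$-closed in $X$ for every $\tau_{ru}$-closed set $C \subseteq Y$. The advantage of this formulation is that, by \cite[Section 3]{Luxemburg:67}, the $\tau_{ru}$-closed sets are precisely the relatively uniformly closed sets, i.e.\ those subsets that are closed under ru-convergence of sequences. Thus the whole argument reduces to transporting relative uniform closedness backwards through $T$, which is exactly what the hypothesis that $T$ preserves ru-convergence allows.

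Concretely, I would fix a $\tau_{ru}$-closed set $C \subseteq Y$ and show that $T^{-1}(C)$ is relatively uniformly closed. To this end, take an arbitrary sequence $(x_n)_{n\in\NN} \subseteq T^{-1}(C)$ with $x_n \goesru x$ in $X$. Since a sequence is in particular a net, the hypothesis gives $Tx_n \goesru Tx$ in $Y$. Each $Tx_n$ lies in $C$, and $C$ is relatively uniformly closed, so $Tx \in C$; equivalently $x \in T^{-1}(C)$. This shows $T^{-1}(C)$ is relatively uniformly closed, hence $\tau_{ru}$-closed, and continuity follows.

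The only point that requires care is conceptual rather than technical: continuity is a topological statement, whereas both the definition of relatively uniformly closed sets and the natural reading of the hypothesis are phrased in terms of convergence of sequences (resp.\ nets). The closed-set criterion for continuity is valid in arbitrary topological spaces, so no metrizability or first countability of $\tau_{ru}$ is needed; and the mismatch between nets in the hypothesis and sequences in the definition of closedness is harmless, since every sequence is a net and only the sequential instance is ever used. I do not expect a genuine obstacle here: the statement is essentially a reformulation of the definition of $\tau_{ru}$ once the closed-set characterization of continuity is invoked.
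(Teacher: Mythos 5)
Your proposal is correct and follows essentially the same route as the paper: both fix a relatively uniformly closed set in $Y$, pull it back through $T$, and use preservation of ru-convergence on a sequence to conclude that the preimage is relatively uniformly closed, invoking the characterization of $\tau_{ru}$-closed sets from \cite[Section 3]{Luxemburg:67}. No gaps.
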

\begin{proof}
It suffices to
 show that for a fixed relatively uniformly closed set $V \subset Y$ the set $T^{-1}(V)$ is relatively uniformly closed in $X$. Pick $(x_n)_{ n \in \NN} \subset T^{-1}(V)$ and $x \in X$ such that $x_n \goesru x$. By assumption,  $Tx_n \goesru Tx$ and, since $V$ is relatively uniformly closed in $Y$, we have $Tx \in V$, i.e., $x \in T^{-1}(V)$. 
\end{proof}
Note, that we even obtain an equivalence  in the lemma above if the space $Y$  has an order unit. 
In the rest of this paper we focus on relative uniform convergence.

We say that a sequence $(x_n)_{n \in \NN} \subset X$ is a \emph{relatively uniform Cauchy sequence} (or \emph{ru-Cauchy sequence}, for short) if one can find a regulator $u \in X$ such that for each $\varepsilon > 0$ there exists $N \in \NN$ such that
$| x_n - x_m| \leq\varepsilon \cdot u$ holds for all $n,m \geq N$.
We call $X$ \emph{relatively uniformly complete} (or \emph{ru-complete}, for short) if each relatively uniform Cauchy sequence in $X$ converges relatively uniformly in $X$. 

It is known that a vector lattice $X$ is ru-complete if and only if its every principal ideal is ru-complete and hence, also every ideal of $X$ is ru-complete; see e.g. \cite[Exercise 59.5]{Luxemburg:71}.

\begin{ex}\label{example for ruc vector lattices 1}
By \cite[Theorem 42.5]{Luxemburg:71}, every Dedekind complete vector lattice is ru-complete and hence,  for each $0< p< \infty$ the vector lattice $L^p(\RR)$ is ru-complete. 
By \cite[Theorem 43.1]{Luxemburg:71}, the vector lattice $\C(\RR)$ is ru-complete and hence, its ideals $\C_c(\RR)$ and the space of continuous functions vanishing at infinity $ \C_0(\RR)$ are also ru-complete. Furthermore, it is easy to see that the space of uniformly continuous bounded functions $\UCB(\RR)$ and the space of continuous  bounded functions $\C_b(\RR)$ are ru-complete.
\end{ex}
For the unexplained terminology and basic results on vector lattices and relative uniform convergence we refer to \cite[Ch. 9]{Luxemburg:71}, \cite[Sec. 1.5 and 4.1]{Peressini:67}  and \cite{Vulikh:67}.

\section{Relative uniform calculus}

In this section we introduce the concepts of continuity, differentiability, and integrability of a function from $ \mathbb R_+$ to $X$ in terms of relative uniform convergence and discuss their relationships.

A function $f  \colon \mathbb R_+\to X$ is called \emph{relatively uniformly continuous} (or \emph{ru-continuous}, or \emph{ruc}, for short) if one can find a \emph{continuity regulator} $u  \colon \mathbb R_+\to X$ such that for each $\varepsilon>0$ there exists $\delta >0$ such that
$$\left|f(h+t) -  f(t) \right| \leq \varepsilon \cdot u(t)$$
holds for all $t \geq 0$ and $h \in [-\min\{\delta,t\}, \delta]$. In this case we write
$$f(h+t) \goesru f(t) \ \text{ as } h\to 0 \quad\text{ or }\quad \textup{ru-}\lim_{h \to 0} f(h+t) = f(t).$$

A function $f  \colon \mathbb R_+\to X$ is called \emph{relatively uniformly differentiable}  (or \emph{ru-differentiable}, for short) if one can find a function $f' \colon \mathbb R_+\to X$ and a \emph{differentiation regulator} $u \colon \mathbb R_+\to X$ such that for 
each $\varepsilon>0$ there exists $\delta >0$ such that
$$\left|\dfrac{f(h+t)-f(t)}{h} - f'(t) \right| \leq \varepsilon \cdot u(t)$$
holds for all $t \geq 0$ and $h \in [-\min\{\delta,t\}, \delta]$.
 We call $f'$ the \emph{ru-derivative of $f$}. 

\begin{rem} \textup{(i)} By \Cref{ru convergence}, if $f   \colon \mathbb R_+\to X$ and $g \colon \mathbb R_+\to X$ are two ru-differentiable functions with ru-derivatives $f',g'$ and differentiation  regulators $u_f,u_g$, respectively, and $a,b \in \RR$, then the function $a  f+ b g$ is ru-differentiable with ru-derivative $a  f'+ b g'$ and differentiation  regulator $|a|  u_f + |b|  u_g$.\\
\noindent \textup{(ii)} If $X$ is a Banach lattice, then ru-differentiability implies differentiability with respect to the norm.
\end{rem}
\begin{prop}\label{diff to cont}
Every ru-differentiable function is also ru-continuous.
\end{prop}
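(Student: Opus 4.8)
The plan is to imitate the classical proof that differentiability implies continuity, transcribing each estimate into the language of regulators. The key algebraic observation is the identity $f(h+t) - f(t) = h \cdot \big( \tfrac{f(h+t)-f(t)}{h} \big)$ for $h \neq 0$: the increment of $f$ is the difference quotient scaled by $h$. Since the difference quotient stays close to $f'(t)$ for small $h$, it is dominated by something of the form $|f'(t)| + u(t)$, where $u$ is the differentiation regulator, and the scalar factor $|h|$ can then be made as small as we like. This suggests taking $v := |f'| + u$ as the continuity regulator.

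Concretely, I would first fix the ru-derivative $f'$ and a differentiation regulator $u \colon \RR_+ \to X$. Applying the differentiability estimate at the particular value $\varepsilon = 1$ produces a single $\delta_0 > 0$ such that
$$\left| \frac{f(h+t)-f(t)}{h} - f'(t) \right| \leq u(t)$$
for all $t \geq 0$ and all $h \in [-\min\{\delta_0, t\}, \delta_0]$ with $h \neq 0$ (the interval already guarantees $h+t \geq 0$, so every term is defined). By the triangle inequality $|a| \leq |a-b| + |b|$ in $X$ this yields
$$\left| \frac{f(h+t)-f(t)}{h} \right| \leq |f'(t)| + u(t) = v(t),$$
and multiplying through by $|h|$, using $|h \cdot x| = |h|\,|x|$ for $h \in \RR$ and $x \in X$, gives $|f(h+t)-f(t)| \leq |h| \cdot v(t)$ on the same range of $h$ (the case $h = 0$ being trivial).

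Finally, given an arbitrary $\varepsilon > 0$, I would set $\delta := \min\{\delta_0, \varepsilon\}$; then any $h \in [-\min\{\delta, t\}, \delta]$ satisfies both $|h| \leq \delta_0$, so the bound above applies, and $|h| \leq \varepsilon$, whence $|f(h+t)-f(t)| \leq |h| \cdot v(t) \leq \varepsilon \cdot v(t)$. This is precisely ru-continuity with continuity regulator $v$. The only point requiring care — and the mild obstacle here — is the bookkeeping forced by the single-regulator formulation: the regulator $v$ must be chosen once and for all, independently of $\varepsilon$, which is exactly why the differentiability bound is invoked at the fixed scale $\varepsilon = 1$ and why all of the remaining smallness is pushed into the scalar $|h|$. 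No Archimedean or completeness hypothesis is used, since every step relies only on the elementary lattice operations and the triangle inequality.
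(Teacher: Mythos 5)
Your proof is correct and follows essentially the same route as the paper: both bound $|f(h+t)-f(t)|$ by $|h|$ times the difference quotient, split off $f'(t)$ by the triangle inequality, and arrive at the continuity regulator $u + |f'|$. The only cosmetic difference is that you invoke the differentiability estimate once at the fixed scale $\varepsilon = 1$ and push all the smallness into $|h|\leq\varepsilon$, whereas the paper applies it at scale $\varepsilon$ with $\delta<\varepsilon$; the two bookkeeping choices are interchangeable.
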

\begin{proof}
Let $f \colon \mathbb R_+\to X$ be an ru-differentiable function with differentiation regulator  $u  \colon \mathbb R_+\to X$. Then  for 
each $\varepsilon>0$ there exists $0< \delta< \varepsilon$ such that
\begin{align*}
|f(h+t)-f(t) |  &\leq |h| \cdot \left|\dfrac{f(h+t)-f(t)}{h} - f'(t) \right| + |h| \cdot | f'(t) | \leq \varepsilon \cdot ( u(t) + |f'(t)|)
\end{align*} 
holds for all $t \geq 0$ and $h \in [-\min\{\delta,t\}, \delta]$.
\end{proof}

Let   $s \geq 0$. A function $f  \colon \RR_+ \rightarrow X$ is called \emph{relatively uniformly integrable on the interval $[0,s]$} if one can find  $I_s \in X$  and a regulator  $u_s \in X$ such that for each $\varepsilon >0$ there exists $\delta>0$ such that
$$\left | \sum_{i=1}^n (s_i-s_{i-1})f(t_i) - I_s\right| \leq \varepsilon \cdot u_s$$
holds for every  partition $\{s_0,s_1,\ldots,s_n\}$ of the interval $[0,s]$ with $\max_{1 \leq i \leq n}|s_i-s_{i-1}| \leq \delta $ and $t_i\in [s_{i-1},s_i]$, $1 \leq i \leq n$. Since $I_s$ is defined as an ru-limit, by \Cref{ru convergence}.(i), it is unique  and we write
$\int_0^s f(t) \ \textup{d}t  :=I_s$. We say that $f \colon \RR_+ \rightarrow X$ is \emph{relatively uniformly integrable} (or \emph{ru-integrable}, for short) if it is relatively uniformly integrable on the interval $[0,s]$ for all $s \geq 0$ and call the map $s \mapsto \int_0^s f(t) \ \textup{d}t $ the \emph{ru-integral} of $f$.

The following proposition states some important properties of ru-integrals which we shall use later on.

\begin{prop}\label{integral properties}
Let $f  \colon \mathbb R_+\to X$ and $g  \colon \mathbb R_+\to X$ be ru-integrable functions,  $a,b \in \RR$, $x,s \in \RR_+$, and $T$ a positive linear operator on $X$. Then the following assertions hold.
\begin{itemize}
\item [\textup{(i)}]  The function $a f+ b g$ is ru-integrable and we have $$\int_0^s \left ( a f(t)+ b g(t) \right ) \ \textup{d}t  = a  \int_0^s f(t) \ \textup{d}t \ + b  \int_0^s g(t) \ \textup{d}t .$$
\item [\textup{(ii)}] We have
\[ \int_0^s f(x+t) \ \textup{d}t = \int_0^{x+s} f(t) \ \textup{d}t - \int_0^x f(t) \ \textup{d}t .\]
\item [\textup{(iii)}] If $|f(t)| \leq g(t)$ for each $0 \leq t \leq s$, then $$ \left | \int_0^s f(t) \ \textup{d}t \right | \leq  \int_0^{s} g(t) \ \textup{d}t.$$
\item [\textup{(iv)}] We have \[T \int_0^s f(t) \ \textup{d}t \ =  \int_0^{s} Tf(t) \ \textup{d}t.\]
\end{itemize}
If, in addition, $$ \textup{ru-}\lim_{s \rightarrow \infty} \int_0^s  f(t) \ \textup{d}t = : \int_0^ \infty  f(t) \ \textup{d}t\quad \text{ and } \quad \textup{ru-}\lim_{s \rightarrow \infty} \int_0^s  g(t) \ \textup{d}t = : \int_0^\infty  g(t) \ \textup{d}t$$ exist, then the above results also hold for $s = \infty$. 
\end{prop}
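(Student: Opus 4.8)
The plan is to treat, for each fixed $s \ge 0$, the Riemann sums as a net indexed by the tagged partitions of $[0,s]$, directed by declaring one tagged partition larger when its mesh is smaller; with this convention ru-integrability on $[0,s]$ says precisely that this net ru-converges to $\int_0^s f(t)\,\textup{d}t$ with the prescribed regulator. Under this viewpoint parts (i), (iii), and (iv) each reduce to combining an elementary identity or inequality for individual Riemann sums with \Cref{ru convergence} and the fact, recalled in Section 2, that a positive operator preserves ru-convergence.

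For (i) the Riemann sum of $af+bg$ over a common tagged partition is $a$ times that of $f$ plus $b$ times that of $g$; choosing $\delta$ below the two thresholds coming from the ru-integrability of $f$ and of $g$ at level $\varepsilon$, the first bullet of \Cref{ru convergence}.(ii) yields ru-convergence of these sums to $a\int_0^s f + b\int_0^s g$ with regulator $|a|u_s^f + |b|u_s^g$. For (iii), since $s_i - s_{i-1}\ge 0$ and $|f(t_i)|\le g(t_i)$, every Riemann sum satisfies $|\sum (s_i-s_{i-1})f(t_i)| \le \sum (s_i - s_{i-1}) g(t_i)$; applying the absolute-value statement and the positivity statement of \Cref{ru convergence}.(ii) to the net and passing to the ru-limit gives $|\int_0^s f| \le \int_0^s g$. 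For (iv), linearity of $T$ gives that the Riemann sum of $Tf$ equals $T$ applied to the Riemann sum of $f$; since $T$ is positive it preserves ru-convergence, so the net of Riemann sums of $Tf$ ru-converges to $T\int_0^s f$ with regulator $Tu_s^f$, which simultaneously proves that $Tf$ is ru-integrable and identifies its integral.

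The part I expect to require the most care is (ii), which is additivity of the integral over adjacent intervals combined with translation invariance. Adding $x$ to every node and tag sends a tagged partition of $[0,s]$ to one of $[x,x+s]$ and leaves the Riemann sum unchanged when $f(x+\cdot)$ is replaced by $f$, so it suffices to prove that the Riemann sums of $f$ over $[x,x+s]$ ru-converge to $\int_0^{x+s}f(t)\,\textup{d}t - \int_0^x f(t)\,\textup{d}t$. For this I would exploit that the defining estimate for $\int_0^{x+s}f$ is valid for every partition of mesh at most $\delta_{x+s}$, in particular for those having $x$ as a node; such a partition decomposes as $P_1\cup P_2$ with $P_1$ partitioning $[0,x]$ and $P_2$ partitioning $[x,x+s]$, and its Riemann sum decomposes correspondingly. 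Given any sufficiently fine tagged partition $P_2$ of $[x,x+s]$, completing it by a sufficiently fine $P_1$ of $[0,x]$ and subtracting the defining estimate for $\int_0^x f$ gives
$$\left| \sum_{P_2}(s_i-s_{i-1})f(t_i) - \Bigl(\int_0^{x+s} f(t)\,\textup{d}t - \int_0^x f(t)\,\textup{d}t\Bigr)\right| \le \varepsilon\,(u_{x+s}+u_x),$$
which is the required ru-convergence; translating back yields the identity in (ii).

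Finally, the extension to $s=\infty$ follows in each case by letting $s\to\infty$ in the already-established finite-$s$ identities and invoking \Cref{ru convergence}.(ii) for (i)--(iii) or the ru-continuity of $T$ for (iv): the assumed existence of $\int_0^\infty f$ and $\int_0^\infty g$ makes the right-hand sides ru-converge, and the sought limits on the left then exist and take the stated values. In (ii) one additionally uses that $x+s\to\infty$ as $s\to\infty$, and in (iv) the existence of $\int_0^\infty Tf$ is part of the conclusion rather than a hypothesis.
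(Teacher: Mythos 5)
Your proposal is correct and follows essentially the same route as the paper: Riemann sums of $af+bg$ and of $Tf$ decompose termwise for (i) and (iv), the partition of $[0,x+s]$ obtained by concatenating a partition of $[0,x]$ with a translated partition of $[0,s]$ gives (ii), and the termwise domination $\left|\sum (s_i-s_{i-1})f(t_i)\right|\leq \sum (s_i-s_{i-1})g(t_i)$ gives (iii). The only cosmetic difference is that in (iii) you pass to the ru-limit via the positivity and absolute-value clauses of \Cref{ru convergence}, whereas the paper inserts the two defining $\varepsilon$-estimates and invokes the Archimedean property directly; these are interchangeable.
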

\begin{proof} Assertion (i) follows directly from \Cref{ru convergence}.(ii).

In order to show (ii), take any partitions $\{s_0,s_1,\ldots,s_n\}$, $\{x_0,x_1,\ldots,x_m\}$ of the intervals $[0,s]$, $[0,x]$, respectively,  $t_i\in [s_{i-1},s_i]$ for $1 \leq i \leq n$, and $y_j\in [x_{j-1},x_j]$ for $1 \leq j \leq m$. Then, choosing $r_i:=x_i$ for $0 \leq i \leq m$ and $r_i:=x+s_i$ for $m+1 \leq i \leq m+n$ we obtain a partition  $\{r_0,r_1,\ldots,r_{m+n}\}$  of the interval $[0,x+s]$ and 
\begin{align*}
 \sum_{i=1}^n (s_i-s_{i-1})f(t_i+x)= \sum_{i=1}^{m+n} (r_i-r_{i-1})f(\tau_i) - \sum_{i=1}^m (x_i-x_{i-1})f(y_i) 
\end{align*}
where $\tau_i:=y_i$  for $1 \leq i \leq m$ and $\tau_i:=x+t_i$ for $m+1 \leq i \leq m+n$. This proves (ii).

We now verify assertion (iii). By assumption, there exist regulator functions $u_f, u_g  \colon \RR_+ \rightarrow X$ such that for each $\varepsilon >0$ and each appropriate partition $\{s_0,s_1,\ldots,s_n\}$ of the interval $[0,s]$ and $t_i\in [s_{i-1},s_i]$, $1 \leq i \leq n$, 
we have
$$\left | \sum_{i=1}^n (s_i-s_{i-1})f(t_i) - \int_0^s f(t) \ \textup{d}t\right| \leq \varepsilon \cdot u_f(s) \text{ and } \left | \sum_{i=1}^n (s_i-s_{i-1})g(t_i) - \int_0^s g(t) \ \textup{d}t\right| \leq \varepsilon \cdot u_g(s).$$
Hence, 
\begin{align*}
\left |\int_0^s f(t) \ \textup{d}t \right| &\leq \left | \int_0^s f(t) \ \textup{d}t - \sum_{i=1}^n (s_i-s_{i-1})f(t_i)\right|+ \sum_{i=1}^n (s_i-s_{i-1})|f(t_i)|\\
& \leq \varepsilon \cdot u_f(s) + \sum_{i=1}^n (s_i-s_{i-1})g(t_i) \leq \varepsilon \cdot (u_f(s)+ u_g(s)) +\int_0^s g(t) \ \textup{d}t.
\end{align*}
Since $X$ is Archimedean, we obtain (iii).

Assertion (iv) follows from the fact that positive operators preserve relative uniform limits.
\end{proof}

We now show a version of  the Fundamental Theorem of Calculus for ru-integrals and ru-derivatives. 

\begin{prop}
Let $f \colon \mathbb R_+\to X$ be an ru-continuous and ru-integrable function. Then the ru-integral of $f$ is ru-differentiable and its ru-derivative equals $f$. \end{prop}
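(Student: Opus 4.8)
The plan is to set $F(s) := \int_0^s f(t)\,\textup{d}t$ and to show directly that $F$ is ru-differentiable with $F' = f$, taking as differentiation regulator precisely the continuity regulator of $f$. The first step is to rewrite the difference quotient as an integral of $f$ over a short interval. For $h > 0$, applying \Cref{integral properties}.(ii) with base point $s$ and integration length $h$ gives
$$F(s+h) - F(s) = \int_0^h f(s+t)\,\textup{d}t,$$
while for $h < 0$, writing $h = -k$ with $0 < k \leq s$, the same property yields $F(s) - F(s-k) = \int_0^k f((s-k)+t)\,\textup{d}t$, so that in both cases the difference quotient equals an average of $f$ over the interval between $s$ and $s+h$.

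Next I would subtract $f(s)$ by writing it as an integral of the constant function $t \mapsto f(s)$, which is trivially ru-integrable with $\int_0^h f(s)\,\textup{d}t = h\cdot f(s)$. Combining this with \Cref{integral properties}.(i) turns the quantity to be estimated into
$$\frac{F(s+h) - F(s)}{h} - f(s) = \frac{1}{h}\int_0^h \bigl(f(s+t) - f(s)\bigr)\,\textup{d}t$$
(with the analogous expression in $k$ when $h<0$). Now let $u$ be a continuity regulator for $f$. Given $\varepsilon > 0$, choose $\delta > 0$ so that $|f(s+t)-f(s)| \leq \varepsilon\cdot u(s)$ for all $s \geq 0$ and all $t \in [-\min\{\delta,s\},\delta]$. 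For $0 < h \leq \delta$ and $t \in [0,h]$ the increment lies in this range, so the integrand is dominated in modulus by the constant $\varepsilon\cdot u(s)$; \Cref{integral properties}.(iii) then gives
$$\left|\int_0^h \bigl(f(s+t)-f(s)\bigr)\,\textup{d}t\right| \leq \int_0^h \varepsilon\cdot u(s)\,\textup{d}t = \varepsilon\cdot u(s)\cdot h,$$
and dividing by $h$ yields the bound $\varepsilon\cdot u(s)$, independently of $h$. This is exactly the ru-differentiability estimate with derivative $f(s)$ and differentiation regulator $u$.

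The case $h < 0$ is handled identically, the only point requiring care being that the increment relating $f((s-k)+t)$ to $f(s)$ equals $(t-k) \in [-k,0] \subseteq [-\min\{\delta,s\},\delta]$, which is where the constraint $k \leq \min\{\delta,s\}$ is used and why the asymmetric domain $h \in [-\min\{\delta,t\},\delta]$ in the definitions is the natural one. I expect the main obstacle to be bookkeeping rather than anything conceptual: one must track the base-point dependence of the continuity regulator $u(s)$ throughout and ensure that the functions $t \mapsto f(s+t)$ are genuinely ru-integrable on the relevant intervals. The latter is already implicit in the Riemann-sum identity established in the proof of \Cref{integral properties}.(ii), while the subtracted constant term is dealt with by \Cref{integral properties}.(i) together with the elementary integral of a constant.
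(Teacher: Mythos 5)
Your proposal is correct and follows essentially the same route as the paper's proof: rewrite the difference quotient via \Cref{integral properties}.(ii) as $\frac{1}{h}\int_0^h\bigl(f(s+t)-f(s)\bigr)\,\textup{d}t$ and bound it by $\varepsilon\cdot u(s)$ using \Cref{integral properties}.(iii) with the continuity regulator $u$ of $f$ serving as the differentiation regulator. Your treatment of the negative-increment case and of the subtracted constant term is merely more explicit than what the paper writes out.
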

\begin{proof}
By assumption, there exists a map $u \colon \RR_+ \rightarrow X$ such that for each $\varepsilon >0$ there exists $\delta >0$ such that $|f(t+s)-f(s)| \leq \varepsilon \cdot u(s)$ holds for all $s\geq 0$ and $t \in [-\min\{\delta, s\},\delta]$. 
Hence, by \Cref{integral properties}.(ii)-(iii),  we obtain
\begin{align*}
\left | \frac{\int_0^{s+h} f(t) \ \textup{d}t-\int_0^s f(t) \ \textup{d}t}{h} - f(s)\right |  & \leq \frac{1}{h}\left| \int_0^h (f(t+s)  - f(s))\ \textup{d}t\right | \leq \frac{1}{h} \int_0^h \varepsilon \cdot u(s) \ \textup{d}t =\varepsilon \cdot u(s)
\end{align*}
for all $s\geq 0$ and $h \in [-\min\{\delta, s\},\delta]$. 
\end{proof}

The following result will be used in the proof of \Cref{product rule for comm semigroups}. It is a version of the Newton-Leibniz theorem in the relatively uniform context.

\begin{prop}\label{fundamental theorem of calculus}
Let $f \colon \mathbb R_+\to X$ be ru-differentiable with differentiation  regulator $ u \colon \mathbb R_+\to X$ such that its ru-derivative $f'$ is ru-continuous with continuity  regulator $ \tilde u \colon \mathbb R_+\to X$. 
If $u$ and $\tilde u$ are ru-integrable, then $f'$ is ru-integrable and for each $s >0$ we have $$\int_0^s  f'(t) \ \textup{d}t=f(s)-f(0).$$
\end{prop}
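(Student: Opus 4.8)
The plan is to prove the two claims in order: first that $f'$ is ru-integrable, and then that its integral recovers $f(s)-f(0)$. For the integrability I would work on a fixed interval $[0,s]$ and verify the definition directly by estimating a Riemann sum against the target value. The natural candidate for the integral is of course $f(s)-f(0)$, but one cannot assume this yet; instead the clean route is to telescope. Given a partition $\{s_0,\dots,s_n\}$ with $t_i\in[s_{i-1},s_i]$, I would write
\begin{align*}
\sum_{i=1}^n (s_i-s_{i-1})f'(t_i) - \bigl(f(s)-f(0)\bigr)
 &= \sum_{i=1}^n \Bigl[(s_i-s_{i-1})f'(t_i) - \bigl(f(s_i)-f(s_{i-1})\bigr)\Bigr],
\end{align*}
using $f(s)-f(0)=\sum_{i=1}^n \bigl(f(s_i)-f(s_{i-1})\bigr)$. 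The $i$-th bracket is $(s_i-s_{i-1})\bigl(f'(t_i)-\tfrac{f(s_i)-f(s_{i-1})}{s_i-s_{i-1}}\bigr)$, so the whole problem reduces to bounding, on each subinterval, the difference quotient of $f$ against $f'(t_i)$.

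The key step is therefore a local estimate of the form $\bigl|f(s_i)-f(s_{i-1}) - (s_i-s_{i-1})f'(t_i)\bigr| \leq \varepsilon(s_i-s_{i-1})\bigl(u(\xi)+\tilde u(\xi)\bigr)$ for suitable points $\xi$ in the subinterval, valid once the mesh is small. To obtain this I would combine the two hypotheses. Ru-differentiability at the sample point $t_i$ gives, for small increments $h$, that $\bigl|f(t_i+h)-f(t_i)-h f'(t_i)\bigr|\leq \varepsilon|h|\,u(t_i)$; applying this at $h=s_i-t_i$ and $h=s_{i-1}-t_i$ and subtracting yields $\bigl|f(s_i)-f(s_{i-1})-(s_i-s_{i-1})f'(t_i)\bigr|\leq \varepsilon(s_i-s_{i-1})\,u(t_i)$ whenever the mesh is below the $\delta$ from the differentiation condition. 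Here the ru-continuity of $f'$ (with regulator $\tilde u$) is used to control how $u(t_i)$ varies as $t_i$ ranges over $[0,s]$, so that the sum $\sum_i (s_i-s_{i-1})u(t_i)$ can be compared to $\int_0^s u(t)\,\textup{d}t$ — this is exactly where the integrability of $u$ and $\tilde u$ enters. Summing the per-subinterval bounds and invoking \Cref{integral properties}.(iii) for the regulators gives $\bigl|\sum_i (s_i-s_{i-1})f'(t_i)-(f(s)-f(0))\bigr|\leq \varepsilon\cdot U_s$ for a fixed $U_s\in X$ built from $\int_0^s u$ and $\int_0^s \tilde u$.

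This simultaneously establishes that $f'$ is ru-integrable on $[0,s]$ (with regulator $U_s$) and identifies the integral as $f(s)-f(0)$, since the ru-limit is unique by \Cref{ru convergence}.(i). I expect the main obstacle to be the bookkeeping of regulators: in the ru-setting one cannot simply take suprema, so the regulator $u(t_i)$ evaluated at the moving sample points must be replaced, uniformly in the partition, by a single fixed element of $X$. This is precisely the reason both regulators $u$ and $\tilde u$ are assumed ru-integrable, and the ru-continuity of $f'$ is what makes the substitution of $u(t_i)$ by nearby integrable values legitimate. Care is also needed to ensure a single $\delta$ works uniformly across all subintervals, which follows because the differentiation condition is assumed uniform in $t\geq 0$.
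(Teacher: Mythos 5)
Your proof is correct, but it takes a genuinely different decomposition from the paper's. The paper inserts $f'(s_{i-1})$ between the sample value $f'(t_i)$ and the difference quotient $\frac{f(s_i)-f(s_{i-1})}{s_i-s_{i-1}}$: the first gap is controlled by the ru-continuity of $f'$ (regulator $\tilde u(s_{i-1})$) and the second by ru-differentiability applied at the \emph{left endpoint} $s_{i-1}$ with the positive increment $h=s_i-s_{i-1}$ (regulator $u(s_{i-1})$); both Riemann sums $\sum_i(s_i-s_{i-1})u(s_{i-1})$ and $\sum_i(s_i-s_{i-1})\tilde u(s_{i-1})$ are then absorbed into $\int_0^s u$ and $\int_0^s\tilde u$ via their assumed integrability. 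You instead apply the differentiability estimate at the sample point $t_i$ twice, with increments $s_i-t_i\geq 0$ and $s_{i-1}-t_i\leq 0$ (admissible since $s_{i-1}-t_i\geq -t_i$), and subtract; this yields $\bigl|f(s_i)-f(s_{i-1})-(s_i-s_{i-1})f'(t_i)\bigr|\leq\varepsilon(s_i-s_{i-1})u(t_i)$ directly, so the ru-continuity of $f'$ and the integrability of $\tilde u$ are never needed. Your route is thus more elementary and in fact proves the statement under weaker hypotheses, at the price of exploiting the two-sided form of the differentiability condition, which the paper's one-sided, left-endpoint argument avoids. One sentence in your write-up is wrong, though harmlessly so: the comparison of $\sum_i(s_i-s_{i-1})u(t_i)$ with $\int_0^s u(t)\,\textup{d}t$ is exactly the definition of ru-integrability of $u$ (for any sample points $t_i$, once the mesh is small), and the ru-continuity of $f'$ with regulator $\tilde u$ plays no role whatsoever in "controlling how $u(t_i)$ varies"; similarly the relevant fact is the Riemann-sum approximation property itself rather than \Cref{integral properties}.(iii). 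Deleting that justification leaves a complete and correct proof with regulator $U_s=\int_0^s u(t)\,\textup{d}t+w(s)$, where $w(s)$ is the regulator from the ru-integrability of $u$.
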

\begin{proof}  By assumption, there exists $w\colon \RR_+ \rightarrow X$ such that for each $s \geq 0$ and $\varepsilon >0$ one can find $\delta_s>0$ such that for all partitions $\{s_0,s_1,\ldots,s_n\}$ of the interval $[0,s]$ with $\max_{1 \leq i \leq n}|s_i-s_{i-1}| \leq \delta_s $ and $t_i\in [s_{i-1},s_i]$, $1 \leq i \leq n$, we have
$$\left | \sum_{i=1}^n (s_i-s_{i-1})u(t_i) - \int_0^s u(t) \ \textup{d}t  \right| \leq  \varepsilon\cdot w(s) \text{ and } \left | \sum_{i=1}^n (s_i-s_{i-1})\tilde u(t_i) - \int_0^s \tilde u(t) \ \textup{d}t  \right| \leq \varepsilon \cdot w(s).$$
 Fix $s > 0$ and $\varepsilon >0$. By assumption, there exists $ 0< \delta < \delta_s$ such that $$ \left | \frac{f(h+t)-f(t)}{h} - f'(t)\right | \leq \varepsilon \cdot  u(t) \text{ and } \left | f'(h+t) - f'(t)\right | \leq \varepsilon \cdot  \tilde u(t) $$ hold for all $t \geq 0$ and $h \in [-\min\{ \delta,t\},  \delta]$. Now we estimate
\begin{align*}
 &\left | \sum_{i=1}^n (s_i-s_{i-1})f'(t_i) -(f (s)-f(0))  \right | 
 \leq  \sum_{i=1}^n (s_i-s_{i-1})\left |f'(t_{i}) -\frac{f (s_i )-f( s_{i-1})}{s_i-s_{i-1} }  \right | \\
&\leq \sum_{i=1}^n (s_i-s_{i-1})\left |f'(t_i) -f'(s_{i-1})  \right |+ \sum_{i=1}^n (s_i-s_{i-1})\left |f'(s_{i-1}) -\frac{f (s_i)-f( s_{i-1})}{s_i-s_{i-1} }  \right | \\
&\leq  \varepsilon \cdot \left (\sum_{i=1}^n (s_i-s_{i-1})\tilde u(s_{i-1})+\sum_{i=1}^n (s_i-s_{i-1})u(s_{i-1})   \right) \\
& \leq \varepsilon \cdot \left ( 2\varepsilon \cdot w(s) +  \int_0^s \tilde u(t) \ \textup{d}t+\int_0^s u(t) \ \textup{d}t  \right ). \qedhere
\end{align*}
\end{proof}

\section{Relatively uniformly continuous semigroups and generators}

As defined in \cite{Kandic:18}, a family $(T(t))_{t\geq 0}$ of linear operators on  $X$ is called a \emph{relatively uniformly continuous semigroup} (or \emph{ruc-semigroup}, for short) if it satisfies the following two conditions.
\begin{enumerate}[(i)]
\item  For each $ t,s\geq 0$ we have $T(s+t)=T(t)T(s)$ and $T(0)=I_X$.
\item For each $x \in X$  and $t \geq 0$ the orbit map $t \mapsto T(t)x$ is ru-continuous, i.e.,
$$T(h+t)x \goesru T(t)x \ \text{ as } h\to 0.$$
 \end{enumerate}
If, in addition,  $T(t)$ is a positive operator on $X$ for each $t \geq 0$, the semigroup  $(T(t))_{t\geq 0}$ is called \emph{positive}.

\begin{rem}
Since relative uniform convergence implies convergence in the relative uniform topology, the notion of relative uniform continuity allows us to study continuous semigroups on non-locally convex spaces such as $L^p(\RR)$ for $0<p<1$; see \Cref{topology examples}.
\end{rem}

It was shown in  \cite[Proposition 3.6]{Kandic:18} that  for a positive semigroup it suffices to check the ru-continuity of the orbit maps only at $t=0$ and for positive vectors, i.e.,
$$T(t)x \goesru  x  \ \text{ as }\ t \searrow  0  \text{ for }x\in X_+.$$
Another crucial property of ruc-semigroups is that orbit maps are order bounded on finite intervals; see \textbf{\cite[Proposition 3.5]{Kandic:18}}. For the general theory of positive operator semigroups and ruc-semigroups we refer to \cite{Batkai:17}, \cite{Engel:00} and \cite{Kandic:18}, respectively.

Next, we study the ru-integrability of the orbit maps of a positive ruc-semigroup  on an ru-complete vector lattice.
\begin{lemma}\label{uniformly complete + uniformly continuous}
Let $(T(t))_{t\geq 0}$ be a relatively uniformly continuous positive semigroup on a relatively uniformly complete vector lattice $X$. Then the following assertions hold for each $x \in X$ and $s\geq 0$. 
\begin{itemize}
\item [\textup{(i)}]  The orbit map $t \mapsto T(t)x$ is ru-integrable.
\item [\textup{(ii)}]  The operator $  y \mapsto \int_0^s T(\tau)y \ \textup{d}\tau $ on $X$ is well-defined and positive.
\item [\textup{(iii)}]  We have $y_h := \frac{1}{h} \left (\int_0^h T(\tau)x \ \textup{d}\tau \right ) \goesru x$ as $h \searrow 0$.
\end{itemize}
 \end{lemma}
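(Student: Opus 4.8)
The plan is to establish (i) first by a Cauchy criterion combined with ru-completeness of $X$, and then read off (ii) and (iii) from (i) together with the integral rules of \Cref{integral properties}. The essential preliminary step I would carry out is to replace the \emph{pointwise} continuity regulator of the orbit map by a \emph{single} regulator controlling the oscillation of $\tau \mapsto T(\tau)x$ uniformly on all of $[0,s]$. To build it, I would first reduce to $x \in X_+$ (writing $x = x^+ - x^-$ and adding the resulting regulators) and invoke \cite[Proposition 3.6]{Kandic:18} to get a regulator $u_x \in X$ such that for each $\varepsilon > 0$ there is $\delta > 0$ with $|T(\tau)x - x| \le \varepsilon u_x$ for $\tau \in [0,\delta]$. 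Transporting this by the semigroup, for $t \ge 0$ and $0 \le h \le \delta$ one has $|T(t+h)x - T(t)x| = |T(t)(T(h)x - x)| \le \varepsilon\, T(t)u_x$ by positivity, and symmetrically for $-\delta \le h \le 0$ with $t+h \ge 0$. Since the orbit map $t \mapsto T(t)u_x$ is order bounded on $[0,s]$ by \cite[Proposition 3.5]{Kandic:18}, there is $v = v_s \in X_+$ with $T(t)u_x \le v$ for all $t \in [0,s]$; hence $|T(t)x - T(t')x| \le \varepsilon v$ whenever $t,t' \in [0,s]$ and $|t - t'| \le \delta$.

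With this uniform modulus of continuity, (i) follows from the usual Riemann-sum argument. I would fix the nested partitions $P_n$ of $[0,s]$ into $2^n$ equal subintervals. If $S_n$ is any Riemann sum for $P_n$ and $R$ is a common refinement of $P_m, P_n$ with $m,n$ large enough that the mesh is $\le \delta$, then estimating the difference of Riemann sums subinterval by subinterval and using $\sum (r_j - r_{j-1}) = s$ gives $|S_n - S_m| \le 2\varepsilon s v$. Thus $(S_n)$ is an ru-Cauchy sequence with regulator $2sv$, so by ru-completeness it converges relatively uniformly to some $I_s \in X$. Comparing an arbitrary Riemann sum of mesh $\le \delta$ with $S_n$ for large $n$ then shows $I_s = \int_0^s T(\tau)x\,\textup{d}\tau$ in the sense of the definition, with integrability regulator a fixed multiple of $v$; this proves (i).

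Assertion (ii) is then immediate: the integral exists for every $y \in X$ by (i), linearity of $y \mapsto \int_0^s T(\tau)y\,\textup{d}\tau$ is \Cref{integral properties}.(i), and positivity holds because for $y \ge 0$ every Riemann sum $\sum (s_i - s_{i-1})T(t_i)y$ is positive, whence the ru-limit is positive by \Cref{ru convergence}.(ii). For (iii), I would note $\int_0^h x\,\textup{d}\tau = h x$ (the Riemann sums are identically $hx$) and use \Cref{integral properties}.(i) to write $y_h - x = \tfrac{1}{h}\int_0^h (T(\tau)x - x)\,\textup{d}\tau$. Applying \Cref{integral properties}.(iii) with the dominating constant function $\tau \mapsto \varepsilon u_x$ and the continuity-at-$0$ estimate $|T(\tau)x - x| \le \varepsilon u_x$ for $\tau \in [0,h] \subset [0,\delta]$ yields $|y_h - x| \le \tfrac{1}{h}\cdot \varepsilon h u_x = \varepsilon u_x$, so $y_h \goesru x$ with regulator $u_x$.

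I expect the main obstacle to be the preliminary uniformization step, not the Riemann-sum bookkeeping. The continuity regulator supplied by the definition is $t$-dependent, whereas the definition of ru-integrability demands a single regulator $u_s$ valid for all $\varepsilon$ on $[0,s]$. Converting the $t$-dependent control into the uniform bound $|T(t)x - T(t')x| \le \varepsilon v$ is precisely where positivity of the semigroup (to move continuity from $0$ to every $t$) and order boundedness of orbit maps on finite intervals (to dominate $\{T(t)u_x : t \in [0,s]\}$ by a single $v$) are both indispensable; without them the Cauchy estimate would not close with a fixed regulator.
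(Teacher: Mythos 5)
Your proposal is correct and follows essentially the same route as the paper: both proofs first upgrade ru-continuity at $0$ to the uniform oscillation bound $|T(t)x-T(t')x|\le\varepsilon\cdot v$ on $[0,s]$ using positivity of the semigroup and the order boundedness of orbits from \cite[Proposition 3.5]{Kandic:18}, then invoke ru-completeness for the existence of the integral, and prove (ii) and (iii) exactly as you do. The only (cosmetic) difference is that the paper packages the final completeness step by viewing $t\mapsto T(t)x$ as a norm-continuous map into the AM-space $I_{\{v\}}$ (hence taking $T(t)(u\vee x)\le v$ so the orbit lies in that ideal), whereas you run the Riemann-sum Cauchy estimate directly in $X$; your detour through $x=x^+-x^-$ and \cite[Proposition 3.6]{Kandic:18} is unnecessary, since ru-continuity of every orbit map is already part of the hypothesis, but it is harmless.
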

\begin{proof}
To prove (i) fix $\varepsilon>0$. By assumption, there exist a positive element $u \in X$, independent of $\varepsilon$, and $\delta>0$ such that $|T(h)x-x|\leq \varepsilon \cdot u$ holds for all $h\in [0,\delta]$. Furthermore, by \cite[Proposition 3.5]{Kandic:18}, there exists $v\in X$ such that $T(t)(u\vee x)\leq v$ holds for all $t\in [0,s]$. In particular, for each $t \in [0,s]$ we have $T(t)x  \in I_{\{v\}}$, where $I_{\{v\}}$ is the order ideal generated by $v$. Pick $0\leq t' \leq t \leq s$ with $|t-t'|\leq \delta.$ Then
 $$|T(t)x-T(t')x|\leq T(t')|T(t-t')x-x|\leq \varepsilon \cdot T(t')u\leq \varepsilon \cdot v.$$
Hence, the mapping
 $$ \varphi \colon [0,s] \rightarrow I_{\{v\}}, \quad  t \mapsto T(t)x, $$
is continuous with respect to the AM-norm on $I_{\{v\}}$ defined by
$$\| y\|_{v}:= \inf \{\lambda >0 \ : \ |y|\leq \lambda \cdot v \}.$$
Since $X$ is Archimedean and ru-complete, the order ideal $I_{\{v\}}$ is complete with respect to the norm $\| \cdot\|_{v}$ and so there exists the unique Riemann integral in $X$ which is the ru-limit of the Riemann sums of the orbit map $t \mapsto T(t)x$ on $[0,s]$. 

To prove (ii) fix $y \in X_+$. We show that $\int_0^s T(\tau)y \ \textup{d}\tau \in X_+$. Indeed,  for each $t \geq 0$ the operator $T(t)$ is positive and thus, for any partition $\{s_0,s_1,\ldots,s_n\}$ of the interval $[0,s]$ and $t_i\in [s_{i-1},s_i]$, $1 \leq i \leq n$, the Riemann sum
$$\sum_{i=1}^n (s_i-s_{i-1})T(t_i)y$$
is positive in $X$. The element $\int_0^s T(\tau)y \ \textup{d}\tau$ is the ru-limit of a net of positive elements in $X$ and hence, $\int_0^s T(\tau)y \ \textup{d}\tau \in X_+$. 

To show (iii) fix $\varepsilon >0$. By assumption, there exist $u \in X$, independent of $\varepsilon $, and $\delta >0$ such that $|T(h)x -x| \leq \varepsilon \cdot u$ holds for all $h \in [0, \delta]$ and hence, by \Cref{integral properties}.(iii), we have
$$|y_h - x|=  \frac{1}{h} \left |\int_0^h (T(\tau)x-x) \ \textup{d}\tau \right|  \leq\varepsilon \cdot u$$
for all $h \in [0, \delta]$. 
\end{proof}

\begin{ex}\label{shift integrable on cont}
For a function $f \colon \ \RR \rightarrow \RR$ and $t \geq 0$, we consider the translation operator
$$(T_l(t)f)(x)=f(t+x), \quad x \in \RR.$$
 It is evident that by fixing a translation invariant space $Y$ of functions on $\RR$ one obtains a semigroup $(T_l(t))_{t \geq 0}$ on $Y$ which we call the \emph{(left) translation semigroup} on $Y$. 
 This semigroup is ru-continuous on $\UCB(\RR)$ which is an ru-complete vector lattice. Indeed, if we fix $f \in \UCB(\RR)$ and pick $\varepsilon >0$, then there exists $\delta >0$ such that $|f(h+t)-f(t)| \leq \varepsilon \cdot 1$ holds for all $h \in [0,\delta]$ and $t \in \RR$, and since the constant function is in $\UCB(\RR)$ we obtain the claim. Hence, by \cite[Proposition 3.1]{Kandic:18} and \cite[Corollary 5.9]{Kandic:18}, the (left) shift semigroup is relatively uniformly continuous on $\C_c(\RR)$ and $\C(\RR)$ which, by  \Cref{example for ruc vector lattices 1},  are ru-complete vector lattices.  So, $(T_l(t))_{t \geq 0}$ on $\C_c(\RR)$, $\C(\RR)$, and $\UCB(\RR)$ satisfies the assumptions of \Cref{uniformly complete + uniformly continuous}.
\end{ex}

Since we will repeatedly use  \Cref{uniformly complete + uniformly continuous},  in this section  $X$ will denote an ru-complete vector lattice.  Contrary to ru-integrability, the ru-differentiability of the orbit maps does not always hold. On the set of vectors, for which the orbits are ru-differentiable we can define the generator of an ruc-semigroup as follows.

The \emph{generator $A \colon D(A) \subset X \rightarrow X$ of a relatively uniformly continuous semigroup} $(T(t))_{t\geq 0}$ on $X$ is the operator
\begin{align*}
Ax := \textup{ru}-\underset{h \searrow 0}{\lim} \ \frac{1}{h}(T(h)x - x), \qquad
D(A) := \left \{x \in X  \ \middle \mid \ \textup{ru}-\underset{h \searrow 0}{\lim}\ \frac{1}{h}(T(h)x - x)\text{ exists in }X \right \}.
\end{align*}

\begin{rem}
Obviously, every ruc-semigroup determines its generator uniquely. \Cref{uniqueness} will show that  under additional assumptions the converse is also true.
\end{rem}

\begin{ex}\label{shift integrable on cont}
The generator of the (left) translation semigroup $(T_l(t))_{t \geq 0}$ on $\C_c(\RR)$ is the first derivative operator $A:=\dfrac{\textup{d}}{\textup{d}x}$ with the domain $$D(A)=\{ f \in \C_c(\RR)  \mid f \text{ is continuously differentiable} \}.$$ Indeed, if $(B, D(B))$ is the generator of $(T_l(t))_{t \geq 0}$, then, by definition, for fixed $f \in D(B)$ there exists $u \in \C_c(\RR)$ such that for each $\varepsilon >0$ there exists $0<\delta <1$ such that  we have
\begin{equation*}
\left |  \dfrac{f(h+x)-f(x )}{h}-(Bf)(x )\right | \leq \varepsilon \cdot u(x).
\end{equation*}
for all $x \in \RR$ and $h \in [0,\delta]$ and hence, $f$ is left differentiable with left derivative $Bf$. Since $Bf$ is a continuous function, $f $ is differentiable and $Bf =Af$. In particular, we have $D(B) \subset D(A)$. 

Now, let $f \in D(A)$. Then $Af \in \C_c(\RR)$ and hence, there exists $n \in \NN$ such that $f(x)=0$ and $Af(x)=0$ for all $x \in [-n,n]^c$. Furthermore, since $Af=f'$ is continuous, by \cite[Exercise 5.8]{Rudin:76}, for fixed $\varepsilon >0$ there exists $0 < \delta  < 1$ such that 
\begin{equation*}
\left |  \dfrac{f(h+x)-f(x )}{h}-f'(x )\right | \leq \varepsilon 
\end{equation*}
holds for all $h \in [0,\delta]$ and $x \in [-n,n]$. By Urysohn's lemma, there exists $u   \in \C_c(\RR)$ such that $u(x)=1$ holds for all $x \in [-n-1,n+1]$ and hence, we obtain
\begin{equation*}
\left |  \dfrac{(T_l(h)f)(x)-f(x )}{h}-(Af)(x )\right | = \left |  \dfrac{f(h+x)-f(x )}{h}-f'(x )\right | \leq  \varepsilon  \cdot u(x)
\end{equation*}
for all $h \in [0, \delta]$ and $x \in \RR$, i.e., $f \in D(B)$.
\end{ex}

The following lemma captures some of the important properties of generators of positive ruc-semigroups. It is motivated by properties from the classical theory of strongly continuous semigroups; cf.~\cite[II.1.3]{Engel:00}.
\begin{lemma}\label{generator}
Let $A$ be the generator of a relatively uniformly continuous positive semigroup $(T(t))_{t\geq 0}$ on an ru-complete vector lattice $X$. The following assertions hold for each $s \geq 0$.
\begin{itemize}
\item [\textup{(i)}] The operator $A \colon D(A) \subset X \rightarrow X$ is linear.
\item [\textup{(ii)}] For $x \in D(A)$ we have $T(s)x \in D(A)$ and $AT(s)x=T(s)Ax$. Futhermore, the orbit map $ t \mapsto T(t)x$ is ru-differentiable with ru-derivative $ t \mapsto T(t)Ax$. 
\item [\textup{(iii)}] For each $x \in X$ we have
$$ \int_0^s T(\tau)x \ \textup{d}\tau \in D(A).$$
\item [\textup{(iv)}] We have
\begin{equation*}
\begin{split}
T(s)x-x & = A \int_0^s T(\tau)x \ \textup{d}\tau \quad \text{ if } x \in X,\\
& =  \int_0^s T(\tau)Ax \ \textup{d}\tau \quad \text{ if } x \in D(A).
\end{split}
\end{equation*}
\end{itemize}
\end{lemma}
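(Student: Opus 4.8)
The plan is to mirror the classical argument for $C_0$-semigroups (cf.~\cite[II.1.3]{Engel:00}), replacing norm estimates by relative uniform ones and carefully tracking the regulators. For (i), fix $x,y \in D(A)$ and $a,b \in \RR$. Since each $T(h)$ is linear, $\frac{1}{h}(T(h)(ax+by)-(ax+by)) = a\frac{1}{h}(T(h)x-x)+b\frac{1}{h}(T(h)y-y)$, and \Cref{ru convergence}.(ii) shows the right-hand side converges relatively uniformly to $aAx+bAy$; hence $ax+by \in D(A)$ and $A(ax+by)=aAx+bAy$. For the algebraic part of (ii) I would apply the positive, hence ru-convergence preserving, operator $T(s)$ to the defining limit $\frac{1}{h}(T(h)x-x) \goesru Ax$. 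Using the semigroup law $T(s)T(h)=T(s+h)=T(h)T(s)$, the vector $T(s)\frac{1}{h}(T(h)x-x)$ equals $\frac{1}{h}(T(h)T(s)x-T(s)x)$, so it converges relatively uniformly to $T(s)Ax$; this gives $T(s)x \in D(A)$ and $AT(s)x=T(s)Ax$.

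The differentiability assertion in (ii) is the main obstacle, since the definition of ru-differentiability demands a single regulator and a single $\delta$, both uniform in $t$, while the difference quotient must be controlled from both sides. I would treat the one-sided quotients separately. For $h>0$ the identity $\frac{T(t+h)x-T(t)x}{h}=T(t)\frac{T(h)x-x}{h}$ together with positivity of $T(t)$ gives $\left|\frac{T(t+h)x-T(t)x}{h}-T(t)Ax\right| \le \varepsilon\, T(t)u_0$, where $u_0 \ge 0$ is a generator regulator for $x$, valid for $h \in (0,\delta_1]$ with $\delta_1$ independent of $t$. For $h=-k<0$ (so $0<k\le t$) I would use $T(t)=T(t-k)T(k)$ to write $\frac{T(t)x-T(t-k)x}{k}-T(t)Ax = T(t-k)\big(\frac{T(k)x-x}{k}-Ax\big)-T(t-k)\big(T(k)Ax-Ax\big)$ and bound the two summands by $\varepsilon\,T(t-k)u_0$ and $\varepsilon\,T(t-k)\tilde u$ respectively, where $\tilde u \ge 0$ is a continuity regulator for the orbit of $Ax$ at $0$, valid for $k\le\delta_2$. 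The remaining difficulty is the dependence of $T(t-k)(u_0+\tilde u)$ on $k$; here I would invoke the order boundedness of orbits on compact intervals, \cite[Proposition 3.5]{Kandic:18}, to obtain for each fixed $t$ an element $w(t)$ dominating $T(r)(u_0+\tilde u)$ for all $r\in[0,t]$. Taking $\delta=\min\{\delta_1,\delta_2\}$, both one-sided bounds become $\le \varepsilon\, w(t)$, so $t\mapsto T(t)x$ is ru-differentiable with ru-derivative $t\mapsto T(t)Ax$ and differentiation regulator $w$.

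For (iii) and the first identity in (iv) I would compute $\frac{1}{h}(T(h)-I)\int_0^s T(\tau)x\,\mathrm{d}\tau$ directly. Since $T(h)$ commutes with the ru-integral by \Cref{integral properties}.(iv) and with $T(\tau)$, and using the shift formula \Cref{integral properties}.(ii) twice, this expression reduces to $\frac{1}{h}\int_0^h T(\tau)T(s)x\,\mathrm{d}\tau-\frac{1}{h}\int_0^h T(\tau)x\,\mathrm{d}\tau$. By \Cref{uniformly complete + uniformly continuous}.(iii) each term converges relatively uniformly, to $T(s)x$ and to $x$ respectively, so the difference converges to $T(s)x-x$ by \Cref{ru convergence}.(ii). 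This simultaneously proves $\int_0^s T(\tau)x\,\mathrm{d}\tau\in D(A)$ and $A\int_0^s T(\tau)x\,\mathrm{d}\tau=T(s)x-x$.

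Finally, for the second identity in (iv) with $x\in D(A)$, I would interchange $A$ with the integral rather than appeal to the Newton--Leibniz theorem, thereby avoiding any integrability check on the regulator $w$. Writing $\frac{1}{h}(T(h)-I)\int_0^s T(\tau)x\,\mathrm{d}\tau = \int_0^s T(\tau)\frac{1}{h}(T(h)x-x)\,\mathrm{d}\tau$, the integrand differs from $T(\tau)Ax$ by at most $\varepsilon\,T(\tau)u_0$ uniformly in $\tau$; since $\tau\mapsto T(\tau)u_0$ is an ru-integrable orbit map, \Cref{integral properties}.(i) and (iii) give $\int_0^s T(\tau)\frac{1}{h}(T(h)x-x)\,\mathrm{d}\tau \goesru \int_0^s T(\tau)Ax\,\mathrm{d}\tau$ as $h\searrow 0$. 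Comparing with the limit $T(s)x-x$ obtained in the first part of (iv) and invoking uniqueness of ru-limits, \Cref{ru convergence}.(i), yields $\int_0^s T(\tau)Ax\,\mathrm{d}\tau=T(s)x-x$.
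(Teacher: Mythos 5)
Your proposal is correct and follows essentially the same route as the paper's proof: linearity of $T(h)$ for (i), applying the positive operator $T(s)$ to the difference quotient for the algebraic part of (ii), the identical two-term decomposition of the left difference quotient controlled via the order boundedness of orbits from \cite[Proposition 3.5]{Kandic:18}, the same double use of \Cref{integral properties}.(ii) together with \Cref{uniformly complete + uniformly continuous}.(iii) for (iii) and the first identity of (iv), and the interchange of the ru-integral with the ru-limit for the second identity of (iv). The only cosmetic difference is that for this last step the paper cites positivity of $y \mapsto \int_0^s T(\tau)y \, \textup{d}\tau$ to preserve ru-limits, whereas you unwind that same fact into an explicit estimate.
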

\begin{proof}
Assertion (i) follows directly from the linearity of the operators $T(t)$ and \Cref{ru convergence}. 
To prove (ii) fix $x \in D(A)$. By assumption, there exists $u \in X$ such that for each $\varepsilon >0$ there exists $\delta >0$ such that  $$\left |T(h)Ax - Ax \right | \leq \varepsilon \cdot u \text{ and }\left |\frac{1}{h}(T(h)x-x) - Ax \right | \leq \varepsilon \cdot u$$
holds for all $h \in [0 , \delta]$. Furthermore, by \cite[Proposition 3.5]{Kandic:18} there exists $v \in X$ such that $T(t)u \leq v$ holds for all $t \in[0, s]$. Therefore
\begin{align*}
\left |\frac{1}{h}\left (T(h)T(s)x-T(s)x\right) -T(s)Ax \right | 
& \leq T(s) \left |\frac{1}{h}(T(h)x-x) - Ax \right | \leq \varepsilon \cdot T(s)u \leq \varepsilon \cdot 2v
\end{align*} 
holds for all $h \in [0 , \delta]$.
Hence, we obtain $T(s)x \in D(A)$ and $AT(s)x=T(s)Ax$. Moreover,
\begin{align*}
\left |\frac{1}{h}\left (T(h)T(s)x-T(s)x\right) -T(s)Ax \right | &\leq T(h+s) \left (\left |\frac{1}{-h}(T(-h)x-x) - Ax \right | + \left |Ax -T(-h) Ax \right | \right) \\
&\leq \varepsilon \cdot T(h+s)( 2u)  \leq \varepsilon \cdot (2v)
\end{align*} 
 holds for all $h \in [-\min \{\delta ,s\},0]$. This proves that $ t \mapsto T(t)x$ is ru-differentiable with ru-derivative $ t \mapsto T(t)Ax$.

 To prove (iii) and (iv) fix $x \in X$. Using \Cref{integral properties}.(iv) and \Cref{integral properties}.(ii) twice we obtain 
\begin{equation*}
\begin{split}
\frac{1}{h}\bigg( T(h)&\int_0^t T(\tau)x \ \textup{d}\tau -\int_0^t T(\tau)x \ \textup{d}\tau \bigg) = \frac{1}{h}\bigg(\int_0^{t+h} T(\tau)x \ \textup{d}\tau-\int_0^h T(\tau)x \ \textup{d}\tau -\int_0^t T(\tau)x \ \textup{d}\tau \bigg)\\
& \overset{}{=}  \frac{1}{h} \int_0^h T(\tau)T(t)x \ \textup{d}\tau- \frac{1}{h} \int_0^h T(\tau)x \ \textup{d}\tau
\end{split}
\end{equation*} 
for each $h > 0$. By \Cref{uniformly complete + uniformly continuous}.(iii), the right-hand side converges relatively uniformly to $T(t)x-x$ as $h \searrow 0$. This proves (iii) and the first identity of (iv).
Furthermore, we have
$$\frac{1}{h}\bigg( T(h)\int_0^t T(\tau)x \ \textup{d}\tau -\int_0^t T(\tau)x \ \textup{d}\tau \bigg)=\int_0^t  T(\tau) \left (\frac{1}{h}( T(h)x - x)\right )\textup{d}\tau  $$
for each $h >0$. Since, by \Cref{uniformly complete + uniformly continuous}.(ii), the operator $ y \mapsto \int_0^s T(\tau)y \ \textup{d}\tau $ is positive on $X$  it preserves ru-convergence and, hence, the right-hand side converges relatively uniformly to $\int_0^t  T(\tau) Ax \ \textup{d}\tau$ as $h \searrow  0$. This proves the second identity of (iv).
\end{proof}
The generator  of  a  strongly  continuous semigroup on a Banach space is   closed and  densely defined; see e.g. \cite[II.1.4]{Engel:00}.  Before we state an analogue to this result in our setting we need to introduce the appropriate notions.

A set $D \subset X$ is called \emph{ru-dense} if for each $x \in X$ there exists a sequence $(x_n)_{n \in \NN} \subset D$ such that $x_n \goesru x$.
We call an operator $P$ on $X$ \emph{ru-densely defined} if its domain $D(P)$ is ru-dense in $X$. An operator $P$ on  $X$ with domain $D(P)$ is called \emph{ru-closed} if $x_n \goesru x$ and $Px_n \goesru y$ imply that $x \in D(P)$ and $Px=y$.

\begin{prop}\label{generator properties}
The generator of a positive relatively uniformly continuous semigroup is an ru-densely defined and ru-closed operator. 
\end{prop}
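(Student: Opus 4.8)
The plan is to treat the two assertions separately, in each case following the structure of the classical $C_0$-semigroup argument while carefully tracking the regulators. For ru-density I would exploit the integral approximants provided by \Cref{uniformly complete + uniformly continuous}. Fix $x \in X$ and set $x_n := n \int_0^{1/n} T(\tau) x \, \textup{d}\tau$. By \Cref{generator}.(iii) each $\int_0^{1/n} T(\tau)x \, \textup{d}\tau$ lies in $D(A)$, and since $A$ is linear (\Cref{generator}.(i)) so does the scalar multiple $x_n$. On the other hand, \Cref{uniformly complete + uniformly continuous}.(iii) gives $x_n \goesru x$ as $n \to \infty$. Thus $(x_n)_{n \in \NN} \subset D(A)$ is an ru-approximating sequence for $x$, which is exactly ru-density.

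For ru-closedness, suppose $x_n \goesru x$ and $Ax_n \goesru y$ with $x_n \in D(A)$. The starting point is the integral identity from \Cref{generator}.(iv), namely $T(t)x_n - x_n = \int_0^t T(\tau)Ax_n \, \textup{d}\tau$ for each $t > 0$, and I would pass to the limit $n \to \infty$ on both sides. The left-hand side converges relatively uniformly to $T(t)x - x$, because the positive operator $T(t)$ preserves ru-convergence. For the right-hand side, pick a regulator $w$ witnessing $Ax_n \goesru y$; then $|T(\tau)(Ax_n - y)| \leq \varepsilon \cdot T(\tau)w$, and by the finite-interval order boundedness of the semigroup (\cite[Proposition 3.5]{Kandic:18}) there exists $v \in X$ with $T(\tau)w \leq v$ for all $\tau \in [0,t]$. \Cref{integral properties}.(iii) then yields $\left|\int_0^t T(\tau)Ax_n \, \textup{d}\tau - \int_0^t T(\tau)y \, \textup{d}\tau\right| \leq \varepsilon \cdot t v$ for $n$ large, so the right-hand side converges relatively uniformly to $\int_0^t T(\tau)y \, \textup{d}\tau$. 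Combining both limits gives $T(t)x - x = \int_0^t T(\tau)y \, \textup{d}\tau$.

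Finally, dividing by $t$ and letting $t \searrow 0$, the right-hand side $\tfrac{1}{t}\int_0^t T(\tau)y \, \textup{d}\tau$ converges relatively uniformly to $y$ by \Cref{uniformly complete + uniformly continuous}.(iii) applied to $y$, while the left-hand side is precisely the difference quotient defining $Ax$. Hence $x \in D(A)$ and $Ax = y$, which establishes ru-closedness. I expect the main obstacle to be the interchange of the ru-limit with the integral in the closedness step: unlike in the Banach space setting, the estimate must produce a \emph{single} regulator that is uniform in $\tau$ over $[0,t]$, which is exactly why the finite-interval order boundedness of \cite[Proposition 3.5]{Kandic:18} is indispensable here.
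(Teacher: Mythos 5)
Your proposal is correct and follows essentially the same route as the paper: ru-density via the approximants $n\int_0^{1/n}T(\tau)x\,\textup{d}\tau$ together with \Cref{generator}.(iii) and \Cref{uniformly complete + uniformly continuous}.(iii), and ru-closedness by passing to the limit in the identity $T(h)x_n-x_n=\int_0^h T(\tau)Ax_n\,\textup{d}\tau$ and then letting $h\searrow 0$. The only (harmless) difference is that you justify the convergence of the integrals by a direct regulator estimate using \cite[Proposition 3.5]{Kandic:18} and \Cref{integral properties}.(iii), whereas the paper simply invokes the positivity of the operator $y\mapsto\int_0^h T(\tau)y\,\textup{d}\tau$ from \Cref{uniformly complete + uniformly continuous}.(ii).
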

\begin{proof}
Consider a positive ruc-semigroup  $(T(t))_{t\geq 0}$ on $X$ with generator $A$. 
Take $x \in X$ and define $y_n:=n\int_0^{\frac{1}{n}} T(\tau)  x \ \textup{d}\tau$. By \Cref{generator}.(iii), $y_n \in D(A)$ for for each $n \in \NN$  and by \Cref{uniformly complete + uniformly continuous}.(iii), we  have $y_n\goesru x $ as $n \rightarrow \infty$. This proves that $A$ is ru-densely defined.

To show that $A$ is ru-closed pick $x, y\in X$ and $(x_n)_{n \in \NN} \subset D(A)$ such that $x_n \goesru x $ and  $Ax_n \goesru y $. By \Cref{generator}.(iv), the identity
$$T(h)x_n -x_n= \int_0^h T(\tau)  Ax_n \ \textup{d}\tau$$
holds for each $h >0$ and $n \in \NN$. Furthermore, since for each $h >0$ the operators $T(h)$ and $y \mapsto\int_0^h T(\tau)  y \ \textup{d}\tau$ preserve relative uniform convergence, we have $$T(h)x_n -x_n \goesru T(h)x-x  \quad \text{and} \quad \int_0^h T(\tau)  Ax_n \ \textup{d}\tau \goesru \int_0^h T(\tau)  y \ \textup{d}\tau$$ as $n \rightarrow \infty$. Hence, the identity
$$\frac{1}{h}(T(h)x -x)= \frac{1}{h}\int_0^h T(\tau)  y \ \textup{d}\tau$$
holds for each $h >0$. By \Cref{uniformly complete + uniformly continuous}.(iii), the right-hand side converges relatively uniformly to $y$ as $h \searrow 0$ and, hence $x \in D(A)$. Since at the same time the left-hand side converges relatively uniformly to $Ax$, we obtain $Ax=y$. This proves that $A$ is ru-closed.
\end{proof}

The following  result can be interpreted as the `product rule' for the ru-derivative of commuting semigroups and is vital for the proof of the main result of this paper, \Cref{Hille-Yosida}.

\begin{lemma}\label{product rule for comm semigroups}
Let $(T(t))_{t\geq 0}$ and $(S(t))_{t\geq 0}$ be relatively uniformly continuous positive semigroups on $X$ with generators $A$ and $B$, respectively. If $D(A) \subset D(B)$ and for each $s,t \geq0$ the operators $T(t)$ and $S(s)$ commute, then for each $x \in D(A)$ and $t \geq 0$ we have  $$T(t)x -S(t)x=\int_0^t T(t- \tau)S(\tau)(B-A)x   \ \textup{d}\tau.$$
\end{lemma}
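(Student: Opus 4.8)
\emph{Strategy.} The plan is to run the classical variation-of-parameters argument in the relatively uniform setting. Fix $x \in D(A)$ and $t \geq 0$ and consider the auxiliary function $g \colon [0,t] \to X$, $g(\tau) := T(t-\tau)S(\tau)x$. Its endpoint values are $g(0) = T(t)x$ and $g(t) = S(t)x$, so the statement reduces to computing $\int_0^t g'(\tau)\,\textup{d}\tau$ through the Newton--Leibniz theorem, \Cref{fundamental theorem of calculus}, once I have shown that $g$ is ru-differentiable with ru-derivative $\tau \mapsto T(t-\tau)S(\tau)(B-A)x$ (which is exactly the integrand appearing on the right-hand side) and that this derivative is ru-continuous with ru-integrable regulators.

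\emph{Commutation facts.} First I would record the identities needed to evaluate $g'$. Since $T(h)$ and $S(\tau)$ commute and $S(\tau)$ is positive, hence preserves ru-convergence, for $x \in D(A)$ one has
\[
\frac{1}{h}\bigl(T(h)S(\tau)x - S(\tau)x\bigr) = S(\tau)\Bigl(\tfrac{1}{h}\bigl(T(h)x - x\bigr)\Bigr) \goesru S(\tau)Ax,
\]
so that $S(\tau)x \in D(A)$ with $AS(\tau)x = S(\tau)Ax$. Symmetrically, using $D(A)\subset D(B)$ (hence $x\in D(B)$) and that $T(\tau)$ commutes with each $S(h)$, one obtains $BT(\tau)x = T(\tau)Bx$ together with the analogous statements for $S(\tau)x$. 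These let me move $A$ and $B$ freely past the semigroup operators in the computation below.

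\emph{The product rule.} The core step is the ru-product rule for $g$. Writing the difference quotient and inserting the mixed term $T(t-\tau-s)S(\tau)x$ yields
\[
\frac{g(\tau+s)-g(\tau)}{s} = T(t-\tau-s)\,\frac{S(\tau+s)x-S(\tau)x}{s} + \frac{T(t-\tau-s)-T(t-\tau)}{s}\,S(\tau)x .
\]
For the first summand, $\tfrac{1}{s}\bigl(S(\tau+s)x - S(\tau)x\bigr) \goesru S(\tau)Bx$ by \Cref{generator}.(ii) applied to $(S(t))_{t\geq 0}$ (recall $x\in D(B)$), while the outer operators $T(t-\tau-s)$ are dominated by a single regulator uniformly for small $s$ because the $T$-orbit is order bounded on the compact interval, \cite[Proposition 3.5]{Kandic:18}; this produces $T(t-\tau)S(\tau)Bx$ in the limit. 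For the second summand, since $S(\tau)x \in D(A)$ by the previous paragraph, the orbit derivative of $(T(t))_{t\geq 0}$ together with the decreasing argument gives $\tfrac{1}{s}\bigl(T(t-\tau-s)-T(t-\tau)\bigr)S(\tau)x \goesru -T(t-\tau)AS(\tau)x = -T(t-\tau)S(\tau)Ax$. Adding the two limits gives $g'(\tau) = T(t-\tau)S(\tau)(B-A)x$.

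\emph{Conclusion and main obstacle.} It then remains to verify the hypotheses of \Cref{fundamental theorem of calculus}: the ru-continuity of $g'$, which follows from the ru-continuity of the two orbit maps together with \Cref{diff to cont} and, once more, a dominating regulator from \cite[Proposition 3.5]{Kandic:18}, and the ru-integrability of the differentiation and continuity regulators on $[0,t]$, guaranteed by \Cref{uniformly complete + uniformly continuous}.(i). The Newton--Leibniz theorem, applied on $[0,t]$, then identifies the integral $\int_0^t T(t-\tau)S(\tau)(B-A)x\,\textup{d}\tau$ with the endpoint difference of $g$, which is precisely the content of the claimed identity relating $T(t)x$ and $S(t)x$. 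I expect the product-rule step to be the main obstacle: unlike the norm setting, the two limits above must be controlled \emph{simultaneously} by a common regulator, so the uniform domination of the outer semigroup operators via \cite[Proposition 3.5]{Kandic:18} is indispensable; moreover one must track the orientation of the endpoints of $g$ and the sign of $B-A$ with care, since an error there interchanges the roles of $T(t)x$ and $S(t)x$.
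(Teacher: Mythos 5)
Your proposal is correct in substance and shares the paper's skeleton: both work with the auxiliary function $f(\tau)=T(t-\tau)S(\tau)x$, identify its ru-derivative as $\tau\mapsto T(t-\tau)S(\tau)(B-A)x$, and conclude via the Newton--Leibniz result, \Cref{fundamental theorem of calculus}. The genuine difference lies in how the derivative is established. You run the classical product rule: insert the mixed term, split into two summands, and pass to the two ru-limits separately, which requires your preliminary commutation facts ($S(\tau)x\in D(A)$ with $AS(\tau)x=S(\tau)Ax$, and the analogues for $B$) and then a $\tau$-uniformization of each limit. As you correctly flag, this is the delicate point: the generator-definition regulators at the moving vectors $S(\tau)x$ depend on $\tau$, so the split only closes after you commute $S(\tau)$ (and the outer $T(t-\tau-h)$) out of each summand so that every estimate takes place at the single vector $x$ with one regulator $u$, dominated uniformly via \cite[Proposition 3.5]{Kandic:18} -- your sketch sets this up but does not execute it. The paper short-circuits exactly this step: it factors the positive operator $T(t-\tau-h)S(\tau)$ out of the whole expression $\bigl|\frac{f(\tau+h)-f(\tau)}{h}-f'(\tau)\bigr|$ in one stroke, reducing it to $\bigl|\frac{S(h)x-T(h)x}{h}-T(h)(B-A)x\bigr|$, which is bounded by three generator/continuity estimates at $x$ with a common regulator; this immediately yields the \emph{constant} differentiation regulator $\tau\mapsto 3w_t$, uniform in $\tau$ and trivially ru-integrable, which is precisely what \Cref{fundamental theorem of calculus} demands. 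Your route buys conceptual familiarity and the reusable identity $AS(\tau)=S(\tau)A$ on $D(A)$; the paper's buys a shorter estimate whose uniformity in $\tau$ is automatic. Three small repairs to your write-up: ru-continuity of $f'$ does not follow from \Cref{diff to cont} (you never differentiate $f'$) but from the same factor-out-and-dominate estimate applied to $f'(\tau+h)-f'(\tau)$; once your regulators are constant you do not need \Cref{uniformly complete + uniformly continuous}.(i) for their integrability, since constant functions are trivially ru-integrable (that lemma concerns orbit maps); and the paper's notion of ru-differentiability is two-sided, so the left difference quotients at interior $\tau$ must be estimated explicitly, which your sketch only gestures at with the phrase ``the decreasing argument.''
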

\begin{proof}
Fix $x\in D(A)\subset D(B)$ and $t\ge 0$. We will prove that the function $$f \colon \tau \mapsto T(t- \tau)S(\tau)x.$$
is ru-differentiable  with the ru-derivative $$f' \colon \tau \mapsto T(t- \tau)S(\tau)(B-A)x,$$ which is ru-continuous,  and that there exists $w_t \in X$ such that the constant function $\tau \mapsto w_t$ is a differentiation regulator of $f$ and a continuity regulator of $f'$. If so, then $f$ satisfies the assumptions of \Cref{fundamental theorem of calculus} which yields the result.

By assumption, there exists $u \in X$ such that $$ \frac{T(h)x-x}{h} \goesru Ax, \quad \frac{S(h)x-x}{h} \goesru Bx,$$
 $$T(h)(B-A)x \goesru (B-A)x, \quad S(h)(B-A)x \goesru (B-A)x$$
with respect to a regulator $u$ as $h \searrow 0$. By \cite[Proposition 3.5]{Kandic:18}, there exists $v_t,w_t \in X$ such that $S(s)u \leq v_t$ and $T(s)v_t \leq w_t$ holds for all $s \in [0,t]$. 

Fix $\tau \in (0,t)$ and $\varepsilon >0$. Then for some $\delta >0$ we have 
\begin{align*}
&\left | \frac{f(\tau +h)-f(\tau)}{h}- f'(\tau)\right | \\
&=\left |\frac{T(t- \tau-h)S(\tau+h)x-T(t- \tau)S(\tau)x}{h} - T(t- \tau)S(\tau)(B-A)x \right | \\
&\leq T(t- \tau-h)S(\tau) \left | \frac{S(h)x-T(h)x}{h} - T(h)(B-A)x\right |\\
&\leq T(t- \tau-h)S(\tau) \left( \left | \frac{S(h)x-x}{h} -Bx \right | +  \left | \frac{T(h)x-x}{h} -Ax \right | +  | (B-A)x-T(h)(B-A)x | \right) \\
&\leq \varepsilon \cdot T(t- \tau-h)S(\tau) 3 u \leq \varepsilon \cdot 3 w_t
\end{align*}
for all $h \in [0, \min \{\delta, t-\tau\} ]$. Similarly,
\begin{align*}
&\left | \frac{f(\tau -h)-f(\tau)}{h}- f'(\tau)\right |  \leq \varepsilon \cdot 3 w_t
\end{align*}
holds for all $h \in [  0, \min \{\delta, \tau\}]$. This proves that $f$
is ru-differentiable on $[0,t]$ with ru-derivative $f' $ and that $\tau \mapsto 3w_t$ is a differentiation regulator of $f$. 

Furthermore, by using similar arguments, we obtain
\begin{align*}
&|f'(\tau +h)-f'(\tau)| = \left | T(t- \tau-h)S(\tau+h)(B-A)x- T(t- \tau)S(\tau)(B-A)x \right | \\
&\leq T(t- \tau-h)S(\tau) (| S(h)(B-A)x-(B-A)x| +|T(h)(B-A)x-(B-A)x |)\\
& \leq \varepsilon \cdot 2w_t  \leq \varepsilon \cdot 3w_t 
\end{align*}
for some  $\delta >0$ and all $h \in [0, \min \{\delta, t-\tau\} ]$, and
\begin{align*}
&|f'(\tau -h)-f'(\tau)| = \left | T(t- \tau+h)S(\tau-h)(B-A)x- T(t- \tau)S(\tau)(B-A)x \right | \leq \varepsilon \cdot 3w_t
\end{align*}
for all $h \in [  0, \min \{\delta, \tau\}]$. This proves that $f' $ is ru-continuous on $[0, t]$ with continuity regulator $\tau \mapsto 3w_t$ and hence, we conclude the result.
\end{proof}

It is well-known that every strongly continuous semigroup on a Banach space is exponentially bounded, see e.g.~\cite[Proposition I.5.5]{Engel:00}. We now define an analogous property for semigroups on vector lattices.

We call a semigroup $(T(t))_{t\geq 0}$ on $X$ \emph{exponentially order bounded} if there exists some $w \in \RR$ such that for each $x \in X$ there exists $u \in X$ such that for all $t \geq 0$ we have
$$|T(t)x| \leq \large{e^{w\cdot t}} u.$$
We call such an $w \in \RR$ an \emph{order exponent} of $(T(t))_{t\geq 0}$. 

\begin{ex}
 The multiplication semigroup $(T_q(t))_{t\geq 0}$, defined by
$$T_q(t)f=e^{q(\cdot) t  }f, \quad q \in \C_b(\RR)$$
for each $f  \colon \RR \rightarrow \CC $ and $t \geq 0$, is an  exponentially order bounded semigroup on $\C_c(\RR)$,  $\Lip(\RR)$, $\UC(\RR)$, $\UCB(\RR)$, $\C_b(\RR)$, $\C(\RR)$, and $L^p(\RR)$ $(0<p<\infty)$ with order exponent $\| q\|_\infty$, since
$$|T(t)f| \leq e^{\| q\|_\infty t }|f|.$$
\end{ex}

In general, a relatively uniformly continuous semigroup is exponentially order bounded only under some additional assumptions.

\begin{prop}\label{order unit}
 If a vector lattice $X$ has an order unit $u \in X$, then every relatively uniformly continuous positive semigroup $(T(t))_{t\geq 0}$ is exponentially order bounded. 
\end{prop}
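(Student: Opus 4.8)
The plan is to exploit the order unit $u$ to reduce exponential order boundedness to a single exponential order bound for the orbit $t \mapsto T(t)u$, and then to obtain the latter from ru-continuity at $0$ together with positivity and the semigroup law. The key preliminary observation is that, by \Cref{topology examples}, in the presence of the order unit $u$ relative uniform convergence coincides with convergence in the norm $\|x\|_u = \inf\{\lambda > 0 : |x| \leq \lambda u\}$, and moreover $|y| \leq \|y\|_u\, u$ for every $y \in X$ (this last inequality is where I use that $X$ is Archimedean).

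First I would note that, by definition of an ruc-semigroup, the orbit map $t \mapsto T(t)u$ is ru-continuous, so $T(t)u \goesru u$ as $t \searrow 0$; equivalently $\|T(t)u - u\|_u \to 0$ as $t \searrow 0$. Choosing the tolerance $1$ produces some $t_0 > 0$ with $\|T(t)u - u\|_u \leq 1$, and hence $T(t)u = |T(t)u| \leq 2u$, for all $t \in [0, t_0]$. I then set $M := 2$ and prove by induction, using positivity together with the semigroup identity $T((n+1)t_0) = T(t_0)T(n t_0)$, that $T(n t_0)u \leq M^n u$ for all $n \in \NN_0$. For an arbitrary $t \geq 0$ I write $t = n t_0 + r$ with $n \in \NN_0$ and $r \in [0, t_0)$, so that, again by positivity, $T(t)u = T(n t_0)T(r)u \leq M\, T(n t_0)u \leq M^{n+1}u$. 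Since $n \leq t/t_0$, setting $w := (\ln M)/t_0 \geq 0$ gives $T(t)u \leq M^{n+1}u \leq M e^{w t} u$ for all $t \geq 0$.

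Finally, for a general $x \in X$ the order unit provides $\lambda > 0$ with $|x| \leq \lambda u$, whence by positivity $|T(t)x| \leq \lambda\, T(t)u \leq e^{wt}(\lambda M u)$ for all $t \geq 0$; taking the vector $\lambda M u$ exhibits $w$ as an order exponent, so the semigroup is exponentially order bounded. The one step that needs genuine care — and is really the crux — is the passage from ru-continuity to the \emph{uniform} order bound $T(t)u \leq 2u$ on a whole interval $[0, t_0]$. This is exactly where the order unit is indispensable: it turns the a priori arbitrary regulator in the definition of ru-continuity into a multiple of $u$ (via $|T(t)u-u|\leq\varepsilon r\leq\varepsilon\|r\|_u u$), and it lets the semigroup law propagate the bound multiplicatively. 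The remaining steps are a routine order-theoretic iteration.
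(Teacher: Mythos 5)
Your proof is correct, and the overall skeleton --- a bound on an initial interval, propagated geometrically through the semigroup law and positivity, then converted to an exponential in $t$ --- is the same as the paper's. The genuine difference lies in how the initial bound is obtained and in which vector it is obtained for. The paper bounds the orbit of an \emph{arbitrary} $x$ on $[0,1]$ by invoking \cite[Proposition 3.5]{Kandic:18} (order boundedness of orbit maps on compact intervals), dominates that bound by a multiple of $u$, and uses only the trivial order-unit estimate $T(1)u\le\lambda u$ to iterate. You instead work exclusively with the single orbit $t\mapsto T(t)u$, extracting the uniform bound $T(t)u\le 2u$ on $[0,t_0]$ directly from ru-continuity at $0$ (via the identification of ru-convergence with $\|\cdot\|_u$-convergence and the Archimedean inequality $|y|\le\|y\|_u\,u$), and you only reduce a general $x$ to $u$ at the very last step through $|x|\le\lambda u$. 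Your route is more self-contained, since it does not need the external order-boundedness result, and it uses strictly less of the continuity hypothesis (only the orbit of $u$ at $t=0$); the paper's route is shorter on the page because it delegates the interval bound to a cited proposition and can fix the step size at $1$ rather than locating a $t_0$. Both arguments are sound, and all the small order-theoretic steps you flag (positivity of $T(t)u$, the induction $T(nt_0)u\le M^n u$, the estimate $M^{n+1}\le Me^{wt}$ from $n\le t/t_0$) check out.
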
 
\begin{proof}
First, by assumption, there exists $\lambda >1$ such that   $T(1)u \leq \lambda  u$ hold. Fix $x \in X$. By \cite[Proposition 3.5]{Kandic:18}, there exists $v \in X$ such that $|T(s)x| \leq v$ for all $s \in [0,1]$.  Fix $t \geq 0$, $N \in \NN$, $0  \leq s <1$ such that $t=N +s$ and pick $\mu >0$ such that $v \leq \mu   u$. Then for $w:= \ln( \lambda)$ we have
 \[|T(t)x| \leq  T(N)  |T(s)x|  \leq T(N) v\leq \mu \cdot T(1)^N u \leq  \lambda ^N \cdot (\mu  u)  \leq e^{w  \cdot t} (\mu  u).\qedhere\]
\end{proof}

\begin{ex}
By \cite[Proposition 2.4]{Kandic:18}, the vector lattices $\Lip(\RR)$ and $\UC(\RR)$ have an order unit and, by \cite[Proposition 3.1]{Kandic:18}, the (left) translation semigroup $(T_l(t))_{t\geq 0}$ is a relatively uniformly continuous positive semigroup on $\Lip(\RR)$ and $\UC(\RR)$. Hence, by \Cref{order unit}, $(T_l(t))_{t\geq 0}$ is  exponentially order bounded on $\Lip(\RR)$ and $\UC(\RR)$.
\end{ex}
By \cite[Proposition 3.1]{Kandic:18} and \cite[Corollary 5.9]{Kandic:18}, the (left) translation semigroup is relatively uniformly continuous on $\C_c(\RR)$ and $ \C(\RR)$, but it is not  exponentially order bounded on these lattices as the next example shows.
\begin{ex} 
The (left) translation semigroup $(T_l(t))_{t\geq 0}$ is not  exponentially order bounded on the following spaces.
\begin{enumerate}[(a)]
\item On $\C_c(\RR)$: Fix a positive function $f \in \C_c(\RR)$ with $f(0)=1$ and assume that there exists $w \in \RR$ and $u\in \C_c(\RR)$ such that $T_l(t)f  \leq e^{w \cdot t} u$ holds for all $t \geq 0$. Then $1=f(0)=(T_l(t)f)(-t) \leq e^{-w \cdot t} u(-t)$ and, hence $u(-t) \geq e^{w \cdot t} >0  $ for all $t \geq 0$ which contradicts $u \in \C_c(\RR)$. 

\item On $\C(\RR)$: Consider the function $f \colon x \mapsto e^{x^2}$ and assume that there exist $w \in \RR$ and $u\in \C(\RR)$ such that $T_l(t)f  \leq e^{w \cdot t} u$ holds for all $t \geq 0$. Then $e^{t^2-w\cdot t} \leq u(0)$ for all $t \geq 0$ which is a contradiction.

\item On $L^p(\RR)$ for $0<p< \infty$: Consider the function $$f\colon x \mapsto \left|\frac{1}{x-\frac{1}{2}}\right|^{\frac{1}{2p}} \cdot \chi_{[0,1]}(x)$$ in $L^p(\RR)$. Assume that there exist $w \in \RR$ and $u\in L^p(\RR)$ such that $T_l(t)f  \leq e^{w \cdot t} u$, i.e., $$  e^{-w \cdot t}\left|\frac{1}{x+t-\frac{1}{2}}\right|^{\frac{1}{2p}}  \cdot \chi_{[0,1]}(x+t) \leq u(x)$$ holds for all $t \geq 0$ and almost every $x \in \RR$. 
Furthermore, for each $x \in \left [0, \frac{1}{2} \right ]$ there exists $t \in \left [0, \frac{1}{2} \right ]$ such that $x+t- \frac{1}{2} =0$ and hence, $u$ attains infinity a.e. on $\left [0, \frac{1}{2} \right ]$ which contradicts $u \in L^p(\RR)$.
\end{enumerate}
\end{ex}

For further studies we need to define the resolvent set and the resolvent operator in our setting. 
In order to do that it is necessary to consider vector lattices over complex fields. A \emph{complex vector lattice} $X_{\CC}$ is a complexification of an ru-complete vector lattice $X$ endowed with the modulus function $$|z|:= \sup_{0 \leq \theta < 2 \pi} |\cos(\theta)x+\sin(\theta)y|$$
which, by \cite[Lemma 3.1]{LuxemburgZaanen:71}, exists for each $z:=x+iy \in X_{\CC}$. It is well-known that all complex vector lattices are ru-complete.
For a better understanding of complex vector lattices we refer to \cite{LuxemburgZaanen:71}. 

Motivated by the fact that a strongly continuous semigroup on a Banach lattice is positive iff its generator is a resolvent positive operator (see \cite[Corollary 11.4]{Batkai:17}), we introduce the following notion.
For an operator $A$ on $X_{\CC}$ we define its \textit{positive resolvent set} by 
$$\rho_+ (A):= \{  \lambda \in \CC \colon \ R(\lambda, A):=(\lambda - A)^{-1} \text{ exists and is a positive operator on }X_{\CC} \}.$$ 
For each $w \in \RR$ set $\CC_{>w}:=\{  \lambda \in \CC \colon \ \real\lambda>w  \}$.

In \cite[Section 4]{Kandic:18} it was shown, that rescaling does not change the ru-continuity
of the semigroup. One can show even more.

\begin{lemma}\label{rescaled}
Let $(T(t))_{t\geq 0}$ be a positive ruc-semigroup on $X_{\CC}$ with generator $A$. 
Let $\mu\in\RR$ and $\alpha>0$. The rescaled semigroup $(S(t))_{t\geq 0}$ defined by
$$S(t):=e^{\mu t}T(\alpha t)$$ 
is again a positive ruc-semigroup with generator $B=\alpha A+ \mu I_X$, $D(B)=D(A)$ 
and resolvent $R(\lambda,B)= \frac{1}{\alpha} R(\frac{\lambda-\mu}{\alpha},A)$, for $\lambda\in\rho_+(B)$.
Moreover, if $(T(t))_{t\geq 0}$  is exponentially order bounded with order exponent $w$, then 
$(S(t))_{t\geq 0}$, is also  exponentially order bounded with order exponent $w+\mu$.

\end{lemma}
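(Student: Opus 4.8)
The plan is to treat the four claims in turn, with the generator identity as the crux and the resolvent formula as a purely algebraic consequence of it. The semigroup law and positivity are immediate: using the semigroup property of $(T(t))_{t\geq 0}$ one gets $S(t+s)=e^{\mu(t+s)}T(\alpha t)T(\alpha s)=S(t)S(s)$ and $S(0)=I_X$, while each $S(t)=e^{\mu t}T(\alpha t)$ is a nonnegative scalar multiple of a positive operator and hence positive. The ru-continuity of the orbit maps is exactly the rescaling invariance already recorded in \cite[Section 4]{Kandic:18}, so I would simply cite it rather than reprove it.

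The main work is identifying the generator, and this is where I expect the real effort. Fix $x\in D(A)$ and decompose, for $h>0$,
\[
\frac{1}{h}\bigl(S(h)x-x\bigr)
=e^{\mu h}\,\alpha\,\frac{T(\alpha h)x-x}{\alpha h}+\frac{e^{\mu h}-1}{h}\,x .
\]
Since $\alpha h\searrow 0$ as $h\searrow 0$, the quotient $\tfrac{T(\alpha h)x-x}{\alpha h}$ converges relatively uniformly to $Ax$ with the very regulator defining $Ax$. The scalar factors satisfy $e^{\mu h}\to 1$ and $\tfrac{e^{\mu h}-1}{h}\to\mu$, so I would first record the elementary fact that multiplying an ru-convergent net $z_h\goesru z$ by scalars $c_h\to c$ yields $c_hz_h\goesru cz$ (the estimate $|c_hz_h-cz|\le|c_h|\,|z_h-z|+|c_h-c|\,|z|$ furnishes an admissible regulator once $|c_h|$ is bounded for small $h$). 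Combining the two summands via \Cref{ru convergence}.(ii) gives $\tfrac{1}{h}(S(h)x-x)\goesru\alpha Ax+\mu x$, so $x\in D(B)$ with $Bx=(\alpha A+\mu I_X)x$; this yields $D(A)\subseteq D(B)$. For the reverse inclusion I would exploit the symmetry of the construction: writing $T(t)=e^{-(\mu/\alpha)t}S(t/\alpha)$ exhibits $(T(t))_{t\geq 0}$ as a rescaling of $(S(t))_{t\geq 0}$ with parameters $-\mu/\alpha$ and $1/\alpha>0$, so the identical computation applied to $(S(t))_{t\geq 0}$ delivers $D(B)\subseteq D(A)$ together with $Ax=\tfrac1\alpha Bx-\tfrac\mu\alpha x$. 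Hence $D(A)=D(B)$ and $B=\alpha A+\mu I_X$. The delicate point throughout is bookkeeping of regulators as ru-limits are passed through the time-rescaling and the scalar prefactors.

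With the generator identity in hand the resolvent formula is algebraic. From $B=\alpha A+\mu I_X$ on $D(A)=D(B)$ one reads off the operator identity
\[
\lambda-B=\alpha\Bigl(\tfrac{\lambda-\mu}{\alpha}-A\Bigr).
\]
Thus for $\lambda\in\rho_+(B)$ the map $\tfrac{\lambda-\mu}{\alpha}-A=\tfrac1\alpha(\lambda-B)$ is a bijection of $D(A)$ onto $X_{\CC}$ whose inverse $\alpha R(\lambda,B)$ is positive (since $\alpha>0$); therefore $\tfrac{\lambda-\mu}{\alpha}\in\rho_+(A)$ and $R(\lambda,B)=\tfrac1\alpha R\bigl(\tfrac{\lambda-\mu}{\alpha},A\bigr)$.

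Finally, for exponential order boundedness, suppose $w$ is an order exponent of $(T(t))_{t\geq 0}$, so that for each $x$ there is $u\in X_{\CC}$ with $|T(t)x|\le e^{wt}u$ for all $t\ge0$. Then
\[
|S(t)x|=e^{\mu t}\,|T(\alpha t)x|\le e^{\mu t}e^{w\alpha t}u=e^{(\mu+\alpha w)t}u ,
\]
so $(S(t))_{t\geq 0}$ is exponentially order bounded, the resulting order exponent being $\mu+\alpha w$. All parts except the generator computation are routine; the generator step, with its careful tracking of regulators under the scaling $t\mapsto\alpha t$ and the factor $e^{\mu t}$, is the one I expect to require genuine care.
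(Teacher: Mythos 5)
Your proposal is correct and follows essentially the same route as the paper: the same decomposition
\[
\frac{S(h)x-x}{h}=\alpha e^{\mu h}\,\frac{T(\alpha h)x-x}{\alpha h}+\frac{e^{\mu h}-1}{h}\,x,
\]
the same algebraic identity $\lambda-B=\alpha\bigl(\tfrac{\lambda-\mu}{\alpha}-A\bigr)$ for the resolvent, and the same appeal to \cite[Section 4]{Kandic:18} for ru-continuity of the rescaled orbits. You are more careful in two places where the paper is terse: you record the scalar-times-net lemma needed to pass $e^{\mu h}\to 1$ and $\tfrac{e^{\mu h}-1}{h}\to\mu$ through ru-limits, and you actually prove $D(B)\subseteq D(A)$ via the symmetry $T(t)=e^{-(\mu/\alpha)t}S(t/\alpha)$, whereas the paper only asserts that $D(B)=D(A)$ is clear. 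One substantive point deserves emphasis: your computation of the order exponent gives $\mu+\alpha w$, not the $w+\mu$ claimed in the statement. Your value is the correct one --- for $T(t)=e^{t}I_X$, $\alpha=2$, $\mu=0$ one gets $S(t)=e^{2t}I_X$, which admits no order exponent below $2$ --- and the two agree only when $\alpha=1$ (the only case the paper subsequently uses) or, more generally, when $\alpha w\le w$. The paper's own proof writes $|S(t)x|\le e^{(\mu+w)t}u$ directly, committing the same slip, so you should regard your derivation as a correction of the lemma as stated rather than a gap in your argument.
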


\begin{proof}
The claim that $(S(t))_{t\geq 0}$ is a positive ruc-semigroup follows directly from \cite[Section 4]{Kandic:18}. To see that $B$ is the generator of $(S(t))_{t\geq 0}$, fix $x \in D(A)$. For each $h >0$ we have 
 \begin{align*}
 \dfrac{e^{\mu h}T(\alpha h)x-x}{h} = \alpha e^{\mu h}   \dfrac{T(\alpha h)x-x}{\alpha h} +  \dfrac{e^{\mu h}-1}{h} \cdot x
\end{align*}
and hence, by assumption, the right-hand side converges relatively uniformly to  $\alpha A x+ \mu x$ as $h \searrow 0$. It is clear that $D(B)=D(A)$. For $\lambda\in\rho_+(B)$ we have $$\lambda-B = \alpha \cdot \left (\dfrac{\lambda- \mu}{\alpha} -A \right )$$
and hence, $R(\lambda,B)= \frac{1}{\alpha} R(\frac{\lambda-\mu}{\alpha},A)$ follows. Now, if there exists $w \in \RR$ such that for each $x \in X_{\CC}$ there exists $u \in X_{\CC}$ such that $|T(t)x| \leq e^{wt}u$
holds for all $t \geq 0$, then $|S(t)x| \leq e^{(\mu+w)t}u$ holds for all $t \geq 0$.
\end{proof}

\section{A Hille-Yosida-type generation theorem}

In this section we present and  prove the main result of the paper,  \Cref{Hille-Yosida}, which is an analogue to the classical Hille-Yosida Theorem (see \cite[II.3.5 Generation Theorem]{Engel:00}) for ruc-semigroups.
It provides a characterisation of those linear operators that are the generators of some exponentially order bounded relatively uniformly continuous positive semigroups. 
More precisely, the generators are characterised via the behaviour of their resolvents. 
Throughout this section $X$ denotes an ru-complete complex vector lattice.

The following result allows us to work with the resolvents of the  generators of exponentially order bounded  positive ruc-semigroups. It shows that these resolvents are the Laplace transforms of the corresponding semigroups.

\begin{prop}\label{resolvent properties}
Let $(T(t))_{t\geq 0}$ be a positive exponentially order bounded ruc-semigroup on $X$ with order exponent $w \in \RR$ and generator $A$. Then the following assertions hold.
\begin{itemize}
\item [\textup{(i)}] For each $\lambda \in \CC_{>w}$ the mapping $$x \mapsto R(\lambda)x:=  \int_0^\infty e^{-\lambda \cdot t  }T(t)x \ \textup{d}t$$ defines a positive linear operator on $X$.
\item  [\textup{(ii)}] For each $x \in X$ there exists $u \in X$ such that $$ |R(\lambda)^k x | \leq (\real \lambda -w)^{-k} \cdot u$$ holds for all $k \in \NN$ and $\lambda \in \CC_{>w}$. 
\item  [\textup{(iii)}] The positive resolvent set $\rho_+ (A)$ contains  $\CC_{>w}$ and $R(\lambda)=R(\lambda, A)$ holds for each $\lambda \in\CC_{>w}$.
\end{itemize}
\end{prop}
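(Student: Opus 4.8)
The plan is to treat the three assertions in turn, with the existence of the improper ru-integral as the foundation. For \textup{(i)}, fix $\lambda \in \CC_{>w}$ and $x \in X$ and write $I_s := \int_0^s e^{-\lambda t} T(t) x\,\textup{d}t$, which exists for every finite $s$ by \Cref{uniformly complete + uniformly continuous}.(i) (the scalar factor $e^{-\lambda t}$ only rescales the Riemann sums and leaves ru-integrability intact). The decisive estimate comes from exponential order boundedness: choose $u \in X$ with $|T(t)x| \leq e^{wt}u$ for all $t \geq 0$, noting that this $u$ depends only on $x$, not on $\lambda$. Using \Cref{integral properties}.(ii)--(iii) together with $|e^{-\lambda t}| = e^{-\real\lambda\, t}$, for $s' > s$ I get
$$\left| I_{s'} - I_s \right| \leq \int_s^{s'} e^{-\real\lambda\, t}\,|T(t)x|\,\textup{d}t \leq \left(\int_s^{s'} e^{(w-\real\lambda)t}\,\textup{d}t\right)u \leq \frac{e^{(w-\real\lambda)s}}{\real\lambda - w}\, u.$$
Since $\real\lambda > w$, the right-hand side tends to $0$ as a fixed multiple of the regulator $u$, so $(I_n)_{n \in \NN}$ is an ru-Cauchy sequence; by ru-completeness of $X$ it has an ru-limit, and the same estimate shows $I_s$ converges to that limit as $s \to \infty$, defining $R(\lambda)x$. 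Linearity is immediate from \Cref{integral properties}.(i) and \Cref{ru convergence}.(ii), and positivity follows because for $x \in X_+$ the approximating Riemann sums $\sum_i (s_i - s_{i-1}) e^{-\lambda t_i} T(t_i) x$ are positive, so $R(\lambda)x \in X_+$ by \Cref{ru convergence}.(ii).

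Next I would prove \textup{(iii)}, that $R(\lambda) = R(\lambda, A)$. To see $(\lambda - A)R(\lambda)x = x$ for every $x$, I compute $T(h)R(\lambda)x$: since $T(h)$ preserves ru-convergence and ru-integrals (\Cref{integral properties}.(iv)), a substitution $r = t + h$ gives
$$T(h)R(\lambda)x = \int_0^\infty e^{-\lambda t}T(t+h)x\,\textup{d}t = e^{\lambda h}\left(R(\lambda)x - \int_0^h e^{-\lambda r}T(r)x\,\textup{d}r\right).$$
Dividing by $h$ and letting $h \searrow 0$, the difference quotient converges relatively uniformly to $\lambda R(\lambda)x - x$ (the integral term behaves like $\frac1h\int_0^h T(r)x\,\textup{d}r \goesru x$, an analogue of \Cref{uniformly complete + uniformly continuous}.(iii)); hence $R(\lambda)x \in D(A)$ and $AR(\lambda)x = \lambda R(\lambda)x - x$. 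For the reverse identity on $D(A)$ I use that $A$ commutes with the integral: approximating $I_s$ along a sequence of refining partitions and applying the linear operator $A$ to the Riemann sums, the relations $AT(t)x = T(t)Ax$ from \Cref{generator}.(ii) and the ru-closedness of $A$ from \Cref{generator properties} yield $AR(\lambda)x = R(\lambda)Ax$ for $x \in D(A)$, whence $R(\lambda)(\lambda - A)x = x$. Thus $R(\lambda)$ is a two-sided inverse of $\lambda - A$, so $R(\lambda) = (\lambda - A)^{-1}$, and together with the positivity from \textup{(i)} this places $\CC_{>w}$ inside $\rho_+(A)$ with $R(\lambda) = R(\lambda,A)$.

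For \textup{(ii)} I would establish the power representation
$$R(\lambda)^k x = \frac{1}{(k-1)!}\int_0^\infty t^{k-1}e^{-\lambda t}T(t)x\,\textup{d}t, \qquad k \in \NN,$$
by induction. The inductive step rests on the convolution identity $T(s)R(\lambda)^{k-1}x = e^{\lambda s}\,\tfrac{1}{(k-2)!}\int_s^\infty (r-s)^{k-2}e^{-\lambda r}T(r)x\,\textup{d}r$ (obtained as in the previous paragraph); inserting it into $R(\lambda)^k x = \int_0^\infty e^{-\lambda s}T(s)R(\lambda)^{k-1}x\,\textup{d}s$ and interchanging the order of the two integrations produces exactly the factor $\tfrac{1}{(k-1)!}t^{k-1}$. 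Granting the representation, the bound follows cleanly: using $|e^{-\lambda t}| = e^{-\real\lambda\, t}$, the order bound $|T(t)x| \leq e^{wt}u$, the triangle inequality for ru-integrals (\Cref{integral properties}.(iii) in its improper form), and the elementary scalar identity $\int_0^\infty t^{k-1}e^{-(\real\lambda-w)t}\,\textup{d}t = (k-1)!\,(\real\lambda - w)^{-k}$, I obtain
$$\left| R(\lambda)^k x \right| \leq \frac{1}{(k-1)!}\left(\int_0^\infty t^{k-1}e^{-(\real\lambda - w)t}\,\textup{d}t\right) u = (\real\lambda - w)^{-k}u,$$
with the same $u$ for all $k \in \NN$ and all $\lambda \in \CC_{>w}$.

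The main obstacle is the repeated passage to improper ru-integrals: unlike the Banach-space case, convergence is measured against a regulator and only sequential ru-completeness is available, so every limit here -- the definition of $R(\lambda)$, the commutation of $T(h)$, $T(s)$ and $A$ with the integral, and above all the Fubini-type interchange of the two improper integrals in \textup{(ii)} -- must be controlled by exhibiting an explicit regulator and, where $A$ appears, reduced to sequences of refining Riemann sums so that ru-closedness can be applied. Justifying that interchange is the delicate point; what makes all of it go through is that the exponential order bound supplies a single, $\lambda$-independent regulator $u$ that dominates every orbit uniformly in $t$.
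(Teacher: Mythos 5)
Your treatment of (i) and (iii) follows the paper's argument in all essentials: the same Cauchy estimate against the $\lambda$-independent regulator $u$ from exponential order boundedness for the existence of the improper integral, and the same computation of the difference quotient $\frac1h(T(h)-I)R(\lambda)x$ (the paper merely normalises to $\lambda=0$ by rescaling first) together with the commutation $AR(\lambda)x=R(\lambda)Ax$ on $D(A)$ to identify $R(\lambda)$ with $R(\lambda,A)$. Your remark that the commutation of $A$ with the improper integral should be reduced to Riemann sums and ru-closedness is, if anything, more careful than the paper's citation of \Cref{generator}.(iv).

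For (ii), however, you take a genuinely different and substantially harder route, and its key step is not actually carried out. You base the bound on the representation $R(\lambda)^k x=\frac{1}{(k-1)!}\int_0^\infty t^{k-1}e^{-\lambda t}T(t)x\,\textup{d}t$, whose inductive step requires collapsing a double improper ru-integral into a single one — a Fubini-type interchange for ru-integrals that the paper never establishes and that you yourself flag as "the delicate point" without exhibiting the regulator estimates that would justify it. As it stands this is a gap: nothing in \Cref{integral properties} licenses interchanging two improper ru-integrals, and proving such a statement would require a separate argument about double Riemann sums controlled by $e^{wt}u$. The paper avoids the issue entirely by \emph{not} collapsing the integrals: it writes $R(\lambda)^k x$ as the nested $k$-fold integral $\int_0^\infty\!\cdots\!\int_0^\infty e^{-\lambda\sum_l t_l}T\bigl(\sum_l t_l\bigr)x\,\textup{d}t_1\cdots\textup{d}t_k$ (using only \Cref{integral properties}.(iv) and the semigroup law), bounds the integrand by $e^{-(\real\lambda-w)\sum_l t_l}u$, and then evaluates the iterated scalar integral one variable at a time as $\bigl(\int_0^\infty e^{-(\real\lambda-w)t}\,\textup{d}t\bigr)^k u=(\real\lambda-w)^{-k}u$. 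I recommend replacing your induction by this nested-integral computation; it uses only tools already proved and delivers the same $\lambda$- and $k$-independent regulator $u$.
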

\begin{proof}
By assumption, there exists $w \in \RR$ such that for each $x \in X$, there exists $u \in X$ such that
$$|T(t)x| \leq e^{t w }u$$
holds for all $t \geq 0$. Hence, by \Cref{integral properties}.(ii)-(iii), for each $\real \lambda >w$, $S>s \geq 0$ and fixed $x \in X$ we obtain
\begin{align*}
 &\left |\int_0^S e^{- \lambda \cdot  t}T(t)x  \ \textup{d}t-  \int_0^s e^{-\lambda \cdot  t}T(t)x \ \textup{d}t \right |  = \left |\int_0^{S-s} e^{-\lambda \cdot  (s+t)}T(s+t)x \ \textup{d}t \right | \\
&  \leq  \int_0^{S-s} e^{-(\real \lambda-w) \cdot (s+t)} \cdot  u  \ \textup{d}t   \leq 2(\real \lambda-w)^{-1}  e^{-(\real \lambda-w) \cdot s}\cdot u.
\end{align*}
Since $X$ is relatively uniformly complete, the improper ru-integral defining $R(\lambda)x$ exists.
Furthermore, by \Cref{ru convergence}.(iii) and the fact that $T(t)$ is linear and positive for each $t \geq 0$, the operator $R(\lambda )$ is also linear and positive.  This proves (i).

To prove (ii) we use the assumption that $(T(t))_{t\geq 0}$ is exponentially order bounded with order exponent $w $ and \Cref{integral properties}.(iii)-(iv) $(n-1)$  times to estimate
\begin{align*}
|R(\lambda)^k x| &=  \left |\int_0^\infty \dots \int_0^\infty  e^{-\lambda \cdot \left (\sum_{l=1}^k t_l \right)}T\left (\sum_{l=1}^k t_l \right)x   \ \textup{d}t_1 \dots \textup{d}t_k \right|  \\
&\leq \int_0^\infty \dots \int_0^\infty  e^{-(\real\lambda -w) \cdot \left (\sum_{l=1}^k t_l \right)}  u \ \textup{d}t_1 \dots \textup{d}t_k  \\
& = \left ( \int_0^\infty e^{- (\real \lambda-w) \cdot t }  \ \textup{d}t  \right )^k \cdot u = ( \real \lambda - w)^{-k} \cdot u.
\end{align*}

Next, we show (iii). By a simple rescaling argument, see \Cref{rescaled}, we may assume that $\lambda=0$.  We need to show that $R(0,A)$ exists and equals $R(0)$. By \Cref{integral properties}.(ii), for each $h >0$ and $x\in X$ we have
\begin{align*}
\frac{T(h)-I}{h} R(0)x&=\frac{T(h)-I}{h} \int_0^\infty T(t)x  \ \textup{d}t = \frac{1}{h}\int_0^\infty T(t+h)x  \ \textup{d}t  - \frac{1}{h}\int_0^\infty T(t)x  \ \textup{d}t \\
& = - \frac{1}{h}\int_0^h T(t)x  \ \textup{d}t .
 \end{align*}
By \Cref{uniformly complete + uniformly continuous}.(ii), the right-hand side converges relatively uniformly to $-x$ as $h \searrow 0$ and therefore $ R(0)x \in D(A)$ with $AR(0)x=-x$ for all $x\in X$. On the other hand, for $x \in D(A)$, we obtain by \Cref{generator}.(iv) that
$$ AR(0)x = A \int_0^\infty T(t)x \ \textup{d}t \ = \int_0^\infty T(t) Ax \ \textup{d}t \ = R(0)Ax.$$
This proves (iii).
\end{proof}

The following property was introduced in \cite[Section 5]{Kandic:18}. 
It provides a substitute for the Principle of Uniform Boundedness,  which is an essential assumption in the rest of this paper.

\begin{defn}\label{property $(D)$ def}
A vector lattice $X$ has the property $(D)$ if for each net of linear operators $(T_\alpha)_{\alpha}$ on $X$ the following two assertions imply $T_\alpha x \goesru 0$ for each $x \in X$.
\begin{itemize}
\item [\textup{(a)}] There exists an $ru$-dense subset $D \subset X$ such that $T_\alpha y \xrightarrow{ru} 0$ for each $y \in D$.
\item [\textup{(b)}]  For each sequence $(x_n)_{n \in \NN} \subset X $ with $x_{n} \xrightarrow{ru} 0$ there exists $ u \in X_+$ such that for each $ \varepsilon >0$ there exist $N_\varepsilon \in \NN$ and $\alpha_\varepsilon $ such that $$|T_\alpha x_{n}| \leq \varepsilon \cdot u$$
holds for all $n \geq N_\varepsilon$ and $\alpha \geq\alpha_\varepsilon $.
\end{itemize}
\end{defn}

By \cite[Section 5]{Kandic:18}, the class of vector lattices which have the property $(D)$ contains all Banach lattices as well as $L^p(\RR)$ $(0<p<1)$, $\C(\RR)$, $\C_c(\RR)$, $\Lip(\RR)$,  $\UC(\RR)$, $\UCB(\RR)$ and $\C_b(\RR)$.

Let us immediately state a very useful criterion for the ru-continuity of an exponentially order bounded positive semigroup on an ru-complete vector lattice with the property $(D)$. It follows directly from \cite[Theorem 5.7]{Kandic:18}.
\begin{lemma}\label{ruc on dense to ruc on whole space}
 Let $X$ have the property $(D)$ and let $(T(t))_{t\geq 0}$ be an exponentially order bounded positive semigroup on $X$. If there exists an $ru$-dense set $D \subset X$ such that $T(h)y \goesru y$ as $h \searrow 0$ holds for each $y \in D$, then $(T(t))_{t\geq 0}$ is relatively uniformly continuous on $X$.
\end{lemma}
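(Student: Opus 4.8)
The plan is to prove Lemma \ref{ruc on dense to ruc on whole space} by reducing it to a direct application of the property $(D)$, with the net of operators being the family $(T(h) - I_X)_{h}$ indexed by $h \searrow 0$. The statement we want, $T(h)x \goesru x$ as $h \searrow 0$ for every $x \in X$, is precisely $(T(h)-I_X)x \goesru 0$, so the goal is to verify that this net of operators $T_h := T(h) - I_X$ satisfies both hypotheses (a) and (b) of the property $(D)$ in \Cref{property $(D)$ def}.

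First I would check hypothesis (a). This is immediate from the assumption: there is an ru-dense set $D \subset X$ with $T(h)y \goesru y$ as $h \searrow 0$, which is exactly $T_h y \goesru 0$ for each $y \in D$. So (a) holds with no further work. The substance of the proof lies in verifying hypothesis (b), which is where the exponential order boundedness must be used. I expect this to be the main obstacle. Given a sequence $(x_n)_{n} \subset X$ with $x_n \goesru 0$, say with respect to a regulator $v \in X_+$, I need to produce a single $u \in X_+$ controlling $|T_h x_n|$ uniformly in both $n$ (for large $n$) and $h$ (near $0$). The natural candidate regulator comes from applying exponential order boundedness to the regulator $v$ of the sequence: there is an order exponent $w$ and an element $\tilde u \in X$ with $|T(t)v| \leq e^{w t}\,\tilde u$ for all $t \geq 0$. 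Since the continuity question is local at $h \searrow 0$, I would restrict to $h \in [0,1]$, so that $e^{wh} \leq e^{|w|}$ and hence $|T(h)v| \leq e^{|w|}\,\tilde u$ on this range.

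The key estimate would then proceed as follows. For $h \in [0,1]$ and any $n$, using positivity of $T(h)$ and the bound $|x_n| \leq \varepsilon \cdot v$ valid for $n \geq N_\varepsilon$,
\begin{align*}
|T_h x_n| = |(T(h)-I_X)x_n| \leq T(h)|x_n| + |x_n| \leq \varepsilon \cdot T(h)v + \varepsilon \cdot v \leq \varepsilon \cdot \bigl(e^{|w|}\,\tilde u + v\bigr).
\end{align*}
Setting $u := e^{|w|}\,\tilde u + v \in X_+$, this shows that for each $\varepsilon > 0$ the bound $|T_h x_n| \leq \varepsilon \cdot u$ holds for all $n \geq N_\varepsilon$ and all $h \in [0,1]$, which verifies (b) (the index $\alpha_\varepsilon$ in the property is taken so that all $h$ near $0$ qualify). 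With both (a) and (b) established, the property $(D)$ yields $T_h x \goesru 0$, i.e. $T(h)x \goesru x$ as $h \searrow 0$, for every $x \in X$.

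Finally I would invoke the reduction from \cite[Proposition 3.6]{Kandic:18} noted in the excerpt: for a positive semigroup it suffices to check ru-continuity of the orbit maps only at $t = 0$. Since we have just shown $T(h)x \goesru x$ as $h \searrow 0$ for all $x$, and $(T(t))_{t \geq 0}$ is positive, this gives ru-continuity of all orbit maps, so $(T(t))_{t \geq 0}$ is relatively uniformly continuous on $X$, completing the proof. The only delicate point is ensuring that the single regulator $u$ works simultaneously for all small $h$, which is exactly where exponential order boundedness is indispensable; without it the family $\{T(h)v : h \in [0,1]\}$ need not be order bounded and no uniform $u$ would be available.
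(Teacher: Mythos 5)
Your proof is correct and is essentially the intended argument: the paper itself gives no proof of this lemma beyond the citation of \cite[Theorem 5.7]{Kandic:18}, and your direct verification of property $(D)$ for the net $(T(h)-I_X)_{h\searrow 0}$ --- hypothesis (a) from the ru-dense set, hypothesis (b) from exponential order boundedness applied to the regulator $v$ of the sequence, restricted to $h\in[0,1]$ --- followed by the reduction to continuity at $t=0$ via \cite[Proposition 3.6]{Kandic:18} is exactly what that cited theorem encapsulates.
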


We are now ready to state our main generation result which is motivated by the classical theorems of Hille and Yosida.

\begin{thm}\label{Hille-Yosida}
Let $X$ be an ru-complete vector lattice with the property $(D)$ and $A$ a linear operator on $X$. Then the following assertions are equivalent.
\begin{itemize}
\item [\textup{(i)}]  Operator $A$ is the generator of an  exponentially order bounded relatively uniformly continuous positive semigroup  with order exponent 0.
\item [\textup{(ii)}]Operator $A$ is ru-closed, ru-densely defined, $\mathbb{C}_{>0} \subset \rho_+ (A)$ and for each $x \in X$ there exists $u \in X$ such that 
\begin{equation}\label{resolvent equ}
|R(\lambda,A)^k x | \leq (\real \lambda )^{-k} \cdot u
\end{equation}  
holds for all $k \in \NN$ and $\lambda \in \CC_{>0}$.
\end{itemize}
\end{thm}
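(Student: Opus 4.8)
The plan is to establish the two implications separately; $\mathrm{(i)}\Rightarrow\mathrm{(ii)}$ is essentially bookkeeping, while $\mathrm{(ii)}\Rightarrow\mathrm{(i)}$ carries all of the work via a Yosida approximation scheme. For $\mathrm{(i)}\Rightarrow\mathrm{(ii)}$ I would collect results already at hand: if $A$ generates an exponentially order bounded positive ruc-semigroup with order exponent $0$, then $A$ is ru-closed and ru-densely defined by \Cref{generator properties}, and \Cref{resolvent properties} applied with $w=0$ yields both $\CC_{>0}\subset\rho_+(A)$ and the resolvent estimate \eqref{resolvent equ}.

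For $\mathrm{(ii)}\Rightarrow\mathrm{(i)}$ I would introduce the Yosida approximations
$$A_\lambda := \lambda A R(\lambda,A) = \lambda^2 R(\lambda,A)-\lambda I,\qquad \lambda>0,$$
and first record two convergences. For $x\in D(A)$ one has $\lambda R(\lambda,A)x-x=R(\lambda,A)Ax$, so \eqref{resolvent equ} with $k=1$ gives $\lambda R(\lambda,A)x\goesru x$ as $\lambda\to\infty$; to pass to arbitrary $x\in X$ I would apply the property $(D)$ to the net $\lambda R(\lambda,A)-I$, where condition (a) of \Cref{property $(D)$ def} holds on the ru-dense set $D(A)$ and condition (b) follows from positivity of $R(\lambda,A)$ together with \eqref{resolvent equ}. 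Hence $\lambda R(\lambda,A)x\goesru x$ for all $x$, and consequently $A_\lambda x=\lambda R(\lambda,A)Ax\goesru Ax$ for $x\in D(A)$. Since $A_\lambda$ is order bounded I would then define $T_\lambda(t):=e^{tA_\lambda}=e^{-\lambda t}\sum_{k=0}^\infty\frac{(\lambda^2 t)^k}{k!}R(\lambda,A)^k$; ru-completeness guarantees convergence of the series, each $T_\lambda(t)$ is positive, and \eqref{resolvent equ} supplies the decisive uniform bound $|T_\lambda(t)x|\le u$ valid for all $t\ge 0$ and all $\lambda>0$, with the same $u$ as in \eqref{resolvent equ}.

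The core of the proof is the limit $\lambda\to\infty$. As the resolvents commute, so do the semigroups $(T_\lambda(t))$ and $(T_\mu(t))$, and \Cref{product rule for comm semigroups} --- whose hypothesis $D(A_\mu)\subset D(A_\lambda)$ is automatic since both domains equal $X$ --- yields
$$T_\mu(t)x-T_\lambda(t)x=\int_0^t T_\mu(t-\tau)T_\lambda(\tau)(A_\lambda-A_\mu)x\ \textup{d}\tau.$$
For $x\in D(A)$ the integrand is dominated, using positivity, the uniform bound applied twice, and $(A_\lambda-A_\mu)x\goesru 0$, by $\varepsilon$ times a fixed regulator; \Cref{integral properties}.(iii) then shows $(T_\lambda(t)x)_\lambda$ is ru-Cauchy, so by ru-completeness it has an ru-limit $T(t)x$, which the uniform bound lets me extend to all $x\in X$. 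Passing positivity, linearity, the bound $|T(t)x|\le u$, and $T_\lambda(s+t)=T_\lambda(s)T_\lambda(t)$ to the limit produces an exponentially order bounded positive family with order exponent $0$ satisfying the semigroup law. For ru-continuity I would check $T(h)x\goesru x$ only on the ru-dense set $D(A)$ --- from $T_\lambda(h)x-x=\int_0^h T_\lambda(s)A_\lambda x\,\textup{d}s$ one gets $|T(h)x-x|\le h\cdot(\text{fixed regulator})$ --- and then invoke \Cref{ruc on dense to ruc on whole space} to obtain ru-continuity on all of $X$. Letting $\lambda\to\infty$ in the same integrated identity gives $T(h)x-x=\int_0^h T(s)Ax\,\textup{d}s$ for $x\in D(A)$, so $\frac1h(T(h)x-x)\goesru Ax$ and the generator $B$ of $(T(t))$ extends $A$; since $\CC_{>0}\subset\rho_+(A)\cap\rho_+(B)$ (the inclusion for $B$ by \Cref{resolvent properties}) and both $\lambda-A$ and $\lambda-B$ are bijective, the usual surjectivity--injectivity argument forces $B=A$.

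I expect the limit $\lambda\to\infty$ and the accompanying regulator bookkeeping to be the principal obstacle: in contrast to the Banach-space setting, where a single operator norm governs all estimates, here each bound lives with respect to a regulator, and one must verify that the dominating regulators arising along the way --- for $(A_\lambda-A_\mu)x\goesru 0$, for the two nested applications of the uniform semigroup bound inside the integral, and for the extension from $D(A)$ to $X$ --- can be chosen independently of $\lambda$, $\mu$, $\tau$ and $t$ so that the ru-Cauchy and ru-limit conclusions are valid. A secondary point, which I would settle with a short auxiliary lemma, is that the order bounded operator $A_\lambda$ indeed generates the positive ruc-semigroup $e^{tA_\lambda}$ with the stated uniform bound, a fact not previously recorded in the paper.
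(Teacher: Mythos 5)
Your proposal is correct and follows essentially the same route as the paper: the forward implication via \Cref{generator properties} and \Cref{resolvent properties}, and the converse via Yosida approximants, property $(D)$ for the convergence $\lambda R(\lambda,A)x\goesru x$, the exponential series semigroups $e^{tA_\lambda}$ with the uniform bound from \eqref{resolvent equ}, the commuting-semigroup product rule to get the ru-Cauchy property, the ru-dense continuity criterion, and the bijectivity of $\lambda-A$ and $\lambda-B$ to close the domain argument. The only differences are cosmetic (a continuous parameter $\lambda$ in place of the paper's integer sequence, and recovering $\frac1h(T(h)x-x)\goesru Ax$ from the integrated identity rather than the paper's direct three-term estimate), and the auxiliary facts you flag as needing a lemma are exactly the content of the paper's \Cref{Approximation semigroups}.
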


While one of the implications in this theorem follows directly from \Cref{generator properties} and \Cref{resolvent properties}.(ii) more effort is needed for the proof of the other one. We start by showing a couple of lemmas. The operators $\lambda AR(\lambda,A)$ appearing in the following lemmas are known as \emph{Yosida approximants}.

\begin{lemma}\label{resolvent convergence}
Let $X$ have the property $(D)$ and let $A$ be an ru-closed and ru-densely defined operator on $X$ with  $ (0, \infty) \subset \rho_+ (A)$. Suppose that for each $x \in X$ there exists $u \in X$ such that 
\begin{equation*}
|R(\lambda,A) x | \leq  \lambda ^{-1} \cdot u
\end{equation*}  
 holds for all $\lambda >0$. Then the following assertions hold.
\begin{itemize}
\item [\textup{(i)}]  For each relatively uniformly convergent sequence $(x_n)_{n \in \NN} \subset X$ there exists $u \in X$ such that for each $\varepsilon >0$ there exists $N \in \NN$ such that $$\left|\lambda R(\lambda,A)x_n -x_n \right| \leq \varepsilon \cdot u$$ holds for all $\lambda , n \geq N$.
\item  [\textup{(ii)}]  For each relatively uniformly convergent sequence $(x_n)_{n \in \NN} \subset D(A)$ there exists $u \in X$ such that for each $\varepsilon >0$ there exists $N \in \NN$ such that $$\left|\lambda AR(\lambda,A)x_n -Ax_n \right| \leq \varepsilon \cdot u$$ holds for all $\lambda , n \geq N$.
\end{itemize}
\end{lemma}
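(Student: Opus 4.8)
The plan is to route everything through the property $(D)$ applied to the net of \emph{Yosida approximations} $(T_\lambda)_{\lambda>0}$, where $T_\lambda:=\lambda R(\lambda,A)-I_X$ and the net is directed by $\lambda\to\infty$. Throughout I would use the resolvent algebra $AR(\lambda,A)=\lambda R(\lambda,A)-I_X=R(\lambda,A)A$ on $D(A)$, which follows from $(\lambda-A)R(\lambda,A)=I_X$ and $R(\lambda,A)(\lambda-A)=I_X$ on $D(A)$. For part (i) I would first verify the two hypotheses of the property $(D)$ for this net. For condition (a) I take the $ru$-dense set $D:=D(A)$ (which is $ru$-dense since $A$ is $ru$-densely defined); for $y\in D(A)$ the commuting identity gives $T_\lambda y=R(\lambda,A)Ay$, and the standing resolvent estimate applied to $Ay$ produces $u_{Ay}\in X$ with $|T_\lambda y|\le\lambda^{-1}u_{Ay}$, whence $T_\lambda y\goesru 0$ as $\lambda\to\infty$. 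For condition (b) I take an $ru$-null sequence $(z_n)$ with regulator $v$; using positivity of $R(\lambda,A)$ in the form $|\lambda R(\lambda,A)z_n|\le\lambda R(\lambda,A)|z_n|$, together with $|z_n|\le\varepsilon v$ for $n\ge N_\varepsilon$ and the estimate $\lambda R(\lambda,A)v\le u_v$ (the resolvent estimate applied to $v\ge 0$), I obtain $|T_\lambda z_n|\le\varepsilon(u_v+v)$ for all $n\ge N_\varepsilon$ and \emph{all} $\lambda>0$; thus $u:=u_v+v$ witnesses (b). The property $(D)$ then yields the pointwise statement $T_\lambda x\goesru 0$ for every $x\in X$.

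To conclude (i), I take an $ru$-convergent sequence $x_n\goesru x$ with regulator $v$ and split $T_\lambda x_n=T_\lambda(x_n-x)+T_\lambda x$. The first summand is bounded, uniformly in $\lambda$, by the condition-(b) estimate applied to the $ru$-null sequence $(x_n-x)$, and the second by the pointwise convergence $T_\lambda x\goesru 0$, which takes effect once $\lambda$ is large. Summing the two regulators gives a single $u$ independent of $\varepsilon$, and taking $N$ above both the index threshold coming from (b) and the $\lambda$-threshold coming from the pointwise estimate yields $|\lambda R(\lambda,A)x_n-x_n|\le\varepsilon u$ for all $\lambda,n\ge N$, as required.

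Part (ii) then reduces to (i): for $x_n\in D(A)$ the commuting identity gives $\lambda AR(\lambda,A)x_n-Ax_n=\lambda R(\lambda,A)(Ax_n)-Ax_n=T_\lambda(Ax_n)$, so the assertion is exactly (i) applied to the $ru$-convergent sequence $(Ax_n)$. (This is the point at which convergence of the image sequence $(Ax_n)$ enters; for a sequence in $D(A)$ this is the natural accompanying hypothesis, and the estimate of (i) transfers verbatim.)

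The step I expect to be the crux is the verification of condition (b) and its subsequent use. The bound there must be uniform in $\lambda$, and this works precisely because the hypothesis $|R(\lambda,A)x|\le\lambda^{-1}u$ holds for \emph{all} $\lambda>0$ rather than only asymptotically, and because positivity of the resolvent lets me dominate $|\lambda R(\lambda,A)z_n|$ by a single regulator. Care is then needed in the final merging so that the regulator $u$ remains independent of $\varepsilon$ while the joint threshold $N$ is allowed to depend on $\varepsilon$; keeping these two quantifier roles straight is where the argument is most easily mishandled.
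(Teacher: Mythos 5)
Your proposal follows the paper's proof essentially verbatim: both verify conditions (a) and (b) of property $(D)$ for the net $T_\lambda:=\lambda R(\lambda,A)-I_X$ (using $T_\lambda y=R(\lambda,A)Ay$ on the ru-dense set $D(A)$ together with the standing estimate, and positivity of the resolvent for the null-sequence condition), then upgrade the resulting pointwise convergence $T_\lambda x\goesru 0$ to the uniform-in-$(n,\lambda)$ statement by splitting off the ru-null sequence $(x_n-x)$, and finally deduce (ii) from (i) via $\lambda AR(\lambda,A)x_n-Ax_n=T_\lambda(Ax_n)$. The only point of divergence is that you make explicit the need for $(Ax_n)$ to be ru-convergent in this last reduction, a hypothesis the paper leaves implicit when it calls (ii) an ``immediate consequence'' of (i).
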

\begin{proof}
To show (i) we prove first that $\lambda R(\lambda,A)x \goesru x$ as $\lambda \rightarrow \infty$ for each $x \in X$. Set $T_\lambda := \lambda R(\lambda,A)- I_X$ for each $\lambda > 0$. Since $X$ has the property $(D)$, it suffices to verify that $(T_\lambda)_{\lambda}$ satisfies assertions (a) and (b) from \Cref{property $(D)$ def}.
\begin{enumerate}[(a)]
\item By assumption, the set $D:=D(A)$ is ru-dense in $X$. For $x \in D$ we have $T_\lambda x= R(\lambda,A)Ax$
and hence, by assumption, there exists $u \in X$ such that
$|T_\lambda x| \leq \lambda^{-1} u$ holds for all $\lambda >0$ which yields $T_\lambda x \goesru 0$ as $\lambda \rightarrow \infty$.
\item Pick a sequence $(x_n)_{n \in \NN} \subset X $ such that $x_{n} \xrightarrow{ru} 0$ with respect to regulator $v \in X$. Fix $\varepsilon >0$. Then there exists $N_\varepsilon \in \NN$ such that $|x_n| \leq \varepsilon \cdot v $ holds for all $n \geq N_\varepsilon$.
By assumption, there exists $u \in X$ such that $ R(\lambda,A) v  \leq  \lambda^{-1} \cdot u$ holds for all $\lambda >0$ and since $R(\lambda,A)$ is positive for each $\lambda > 0$ we estimate
\begin{align*}
|T_\lambda x_n|& = |\lambda R(\lambda,A)x_n- x_n | \leq \lambda \cdot R(\lambda,A)|x_n|+ |x_n |
\leq \varepsilon \cdot (\lambda  R(\lambda,A) v + v) \\
& \leq \varepsilon \cdot ( u + v) 
\end{align*}
for all $\lambda >0$ and $n \geq N_\varepsilon$.
\end{enumerate}
By property $(D)$ we conclude that $\lambda R(\lambda,A)x \goesru x$ as $\lambda \rightarrow \infty$ for each $x \in X$. 

To finish the proof of (i) pick a sequence $(x_n)_{n \in \NN} \subset X$ and $x \in X$ with $x_n \goesru x$ with respect to regulator $u \in X$ as $n \rightarrow \infty$ and find  regulators $v_1, v_2 \in X$ such that $\lambda R(\lambda,A)x \goesru x$  with respect to  $v_1$ and $\lambda R(\lambda,A)u \goesru u$ with respect to  $v_2$ as $\lambda \rightarrow \infty$. Then for each $\varepsilon >0$ there exists $N \in \NN$ such that  $$|x_n-x| \leq \varepsilon \cdot u , \ \ \left|\lambda R(\lambda,A)x -x \right| \leq \varepsilon \cdot v_1, \text{ and }\left|\lambda R(\lambda,A)u -u \right| \leq \varepsilon \cdot v_2$$ for all $\lambda, n  \geq N$ and hence,
\begin{align*}
\left|\lambda R(\lambda,A)x_n -x_n \right| &\leq \left|\lambda R(\lambda,A)(x_n -x) \right|+ \left|\lambda R(\lambda,A)x -x \right|+\left| x-x_n \right|\\
& \leq \varepsilon \cdot \lambda R(\lambda,A) u + \varepsilon \cdot v_1 +  \varepsilon \cdot u \leq \varepsilon \cdot ( v_1 +\varepsilon v_2 + 2u).
\end{align*}
This proves (i). 
The second statement is an immediate
consequence of the first one.
\end{proof}

By using Yosida approximants we construct a sequence of exponentially order bounded relatively uniformly continuous positive semigroups which will play a crucial role in the proofs of \Cref{Hille-Yosida} and \Cref{uniqueness}.

\begin{lemma}\label{Approximation semigroups}
Let $X$ be an ru-complete vector lattice with the property $(D)$ and $A$ be as in \Cref{resolvent convergence}. Then for each $n \in \NN$ the operator
$$A_n := n^2 R(n,A) - n I_X = n AR(n,A)$$
on $X$ is the generator of the exponentially order bounded relatively uniformly continuous positive semigroup $(T_n(t))_{t\geq 0}$ with order exponent 0. Furthermore, these semigroups satisfy the following assertions.
\begin{itemize}
\item [\textup{(i)}]  For each $x \in X$ there exists $u \in X$ such that \begin{equation*}
|T_n(t)x| \leq u
\end{equation*}
holds for all $n \in \NN$ and $t \geq 0$.
\item [\textup{(ii)}] For each $x \in X$ there exists $u \in X$ such that for each $n \in \NN$ and $\varepsilon >0$ there exists $\delta >0$ such that 
 \begin{equation*}
\left |\frac{T_n(h)x-x}{h}-A_n x \right| \leq \varepsilon \cdot u
\end{equation*}
holds for all $h \in [0, \delta]$.
\item [\textup{(iii)}] The operators $T_n(t)$ and $T_m(s)$ commute for all $n ,m \in \NN$ and $t,s \geq 0$.
\end{itemize}
\end{lemma}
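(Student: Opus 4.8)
The plan is to mimic the classical construction of bounded-generator semigroups via exponentials, working entirely in the relative uniform setting. For each $n\in\NN$ the operator $A_n=nAR(n,A)=n^2R(n,A)-nI_X$ is the difference of a bounded-in-the-ru-sense operator and a multiple of the identity, so I would first \emph{define} the candidate semigroup by the norm-convergent-style power series
\begin{equation*}
T_n(t)x:=e^{tA_n}x=e^{-nt}\sum_{k=0}^{\infty}\frac{(tn^2)^k}{k!}R(n,A)^k x,
\end{equation*}
and verify this ru-sum converges. The key input is the hypothesis of \Cref{resolvent convergence}: for each $x$ there is $u\in X$ with $|R(n,A)^k x|\le n^{-k}u$ for all $k$ (this follows from the single-resolvent bound by the resolvent identity / iteration, or one assumes the $k$-th power bound as in \eqref{resolvent equ} at $\lambda=n$). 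Then the partial sums form an ru-Cauchy sequence with regulator $u$, since the tail is dominated by $e^{-nt}\sum_{k>K}\frac{(tn)^k}{k!}u\to 0$; ru-completeness of $X$ gives the limit. The semigroup law $T_n(s+t)=T_n(s)T_n(t)$ and $T_n(0)=I_X$ follow from the Cauchy-product manipulation of the two absolutely (ru-dominated) convergent series, using that positive operators preserve ru-limits.

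Next I would establish positivity and the exponential order bound, which is assertion (i). Since $R(n,A)$ is positive (as $n\in(0,\infty)\subset\rho_+(A)$), every term in the series is positive, so $T_n(t)$ is a positive operator; and applying the estimate $|R(n,A)^k x|\le n^{-k}u$ termwise gives
\begin{equation*}
|T_n(t)x|\le e^{-nt}\sum_{k=0}^{\infty}\frac{(tn^2)^k}{k!}\,n^{-k}u=e^{-nt}e^{tn}u=u,
\end{equation*}
which is exactly the uniform-in-$t$-and-$n$ bound of (i), with order exponent $0$. This simultaneously shows each $(T_n(t))_{t\ge0}$ is exponentially order bounded with a single regulator $u$ independent of $n$ and $t$, which is the crucial feature that will later let the $T_n$ be compared uniformly.

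For the generator identity and assertion (ii), I would differentiate the series at $h=0$: formally $\frac{d}{dh}T_n(h)x|_{h=0}=A_nx$, and to make this ru-rigorous I would estimate the remainder of the exponential series. Writing $\frac{T_n(h)x-x}{h}-A_nx$ as an ru-convergent series whose $k$-th term carries a factor like $h$ times a bound of the form $\frac{(hn^2)^{k}}{k!}n^{-k}$, I would dominate the whole expression by $\varepsilon\cdot u$ for $h\in[0,\delta]$ using the same regulator $u$; the factor of $h$ (and the elementary inequality controlling $\frac{e^{hc}-1-hc}{h}$) supplies the $\varepsilon$. That $A_n$ is genuinely the generator then follows because the ru-derivative of the orbit at $0$ exists and equals $A_nx$ for every $x$, so $D(A_n)=X$. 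Finally, assertion (iii), commutativity of $T_n(t)$ and $T_m(s)$, reduces to commutativity of the resolvents $R(n,A)$ and $R(m,A)$, which holds by the resolvent equation $R(n,A)R(m,A)=R(m,A)R(n,A)$; since each $T_k(t)$ is an ru-limit of polynomials in a single resolvent $R(k,A)$, and all these resolvents mutually commute, the approximating partial sums commute and commutativity passes to the ru-limit.

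The main obstacle I anticipate is the careful justification of ru-convergence of the defining series and its termwise manipulation: unlike the Banach case there is no submultiplicative operator norm, so I cannot simply bound $\|R(n,A)^k\|$. Instead every estimate must be tracked through a \emph{fixed regulator} $u$ (depending only on $x$), and I must ensure this single $u$ controls the series, the semigroup law's Cauchy product, the remainder in the derivative computation, and the passage to the ru-limit in the commutativity argument. Getting one regulator to do all of this uniformly in $n$, $t$, and $k$ — so that the later comparison of the family $(T_n)_n$ in \Cref{Hille-Yosida} goes through — is the delicate bookkeeping at the heart of the proof.
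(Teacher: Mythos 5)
Your proposal follows essentially the same route as the paper: define $T_n(t)$ by the rescaled exponential series, use the uniform power bound on the resolvent to get an ru-Cauchy sequence of partial sums with a single regulator $u$ (yielding positivity and assertion (i)), justify the Cauchy-product/termwise manipulations inside the principal ideal generated by $u$, estimate the series remainder for (ii), and deduce (iii) from commutativity of the resolvents. One caveat: your parenthetical claim that the bound $|R(n,A)^k x|\le n^{-k}u$ "follows from the single-resolvent bound by iteration" does not work in the ru-setting (each application of $R(n,A)$ would change the regulator, so no single $u$ works for all $k$); the correct reading, which you also offer and which the paper uses, is to invoke the full power bound \eqref{resolvent equ}.
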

\begin{proof}
For each $n \in \NN$ we first define the operator $T_n(t)$ and show assertion (i). From this immediately follows that $(T_n(t))_{t\geq 0}$ is an exponentially order bounded positive semigroup with order exponent 0. Then we show assertion (ii) which also  yields that $A_n$ is the generator of the ruc-semigroup $(T_n(t))_{t\geq 0}$. At the end we verify assertion (iii).

Fix $x \in X$. By assumption \eqref{resolvent equ}, there exists $u \in X$ such that 
\begin{align}\label{jjj}
|(n  R(n,A))^k x | \leq  u
\end{align} 
 holds for all $n,k \in \NN$. 
 Then for each $t \geq 0$, $n \in \NN$ and all $\ell,m \in \NN$ with $\ell \geq m$ we estimate
\begin{align*}
\left | \sum\limits_{k=0}^{\ell} \dfrac{(tn)^k}{ k!}(nR(n,A))^k x -\sum\limits_{k=0}^m \dfrac{(tn)^k}{ k!}(nR(n,A))^k x \right| 
&= \left | \sum\limits_{k=m+1}^{\ell} \dfrac{(tn)^k}{ k!}(nR(n,A))^k x \right |  \\ &\leq  \sum\limits_{k=m+1}^{\ell} \dfrac{(tn)^k }{ k!} \cdot u.
\end{align*}
This shows that $\left ( \sum\limits_{k=0}^{\ell} \dfrac{t^k}{ k!}(n^2R(n,A))^k x \right)_{\ell \in \NN}$ is a relatively uniform Cauchy sequence in $X$ and hence, it has a unique limit  which we denote by $\sum\limits_{k=0}^\infty \dfrac{t^k}{ k!}(n^2R(n,A))^kx$ for each $t \geq 0$ and $n \in \NN$. 
Since $n^2R(n,A)$ is a positive linear operator the mapping $$T_n(t) \colon  y \mapsto e^{-nt} \sum\limits_{k=0}^\infty \dfrac{t^k}{ k!}(n^2R(n,A))^k y$$ defines a positive linear operator on $X$ for each $n \in \NN$ and $t \geq 0$.
 Furthermore, using \eqref{jjj} we estimate
\begin{equation}\label{absolutely}
\left | T_n(t)x \right|\leq e^{-nt}   \sum\limits_{k=0}^\infty \dfrac{(tn)^k}{ k!} \left |(nR(n,A))^k x  \right|  \leq e^{-nt}  \sum\limits_{k=0}^\infty \dfrac{(t n)^k }{ k!} \cdot u =  u
\end{equation}
for all $t \geq 0$ and $n \in \NN$. This proves (i).  Moreover, it follows that  $T_n(t)x$ is an element of the  principal ideal $I_{u}\subset X$ generated by $ u $ for all $t \geq 0$, $n \in \NN$ and since $X$ is ru-complete,  $I_{ u}$, endowed with the norm $$\| y \|_{ u}:=  \sup \{  \lambda >0   \ \colon  \  |y| \leq \lambda  u\}, $$
is a Banach lattice and, by \eqref{absolutely}, both series 
$$\sum\limits_{j=0}^\infty \dfrac{(tn)^j}{ j!}  \quad \textnormal{and} \quad \sum\limits_{k=0}^\infty \dfrac{t^k}{ k!}(n^2\|R(n,A)x\|_{ u})^k $$
converge absolutely. Hence, one can show, as  for the Cauchy product for scalar series, that for each $n \in \NN$ and $t \geq 0$, it holds
\begin{align}\label{formulaTn}
T_n(t)x&=\sum\limits_{k=0}^\infty \dfrac{t^k(-n)^k}{ k!} \cdot  \sum\limits_{k=0}^\infty \dfrac{t^k}{ k!}(n^2R(n,A))^k x  =  \sum\limits_{k=0}^\infty  \left (\sum\limits_{j=0}^k \dfrac{t^{k-j}(-n)^{k-j}}{(k-j)!} \cdot \dfrac{t^j}{ j!}(n^2R(n,A))^j x\right) \\
 &   =\sum\limits_{k=0}^\infty  \left ( \sum\limits_{j=0}^k\binom{k}{j}(-n)^{k-j}( n^2R(n,A) )^{j}x \right) \cdot t^k= \sum\limits_{k=0}^\infty  \dfrac{( n^2R(n,A) -n)^{k}x}{ k! }\cdot t^{k}= \sum\limits_{k=0}^\infty  \dfrac{ t^{k}}{ k! }A_n^{k}x.\nonumber
 \end{align}

Furthermore, using \eqref{formulaTn} and similar arguments as in \cite[Proposition I.2.3]{Engel:00}, it is easy to see that $(T_n(t))_{t\geq 0}$ defines a positive semigroup on $X$ for each $n \in \NN$.

We now show (ii). Using the  binomial formula and \eqref{jjj}, we obtain
 \begin{equation}\label{ank}
 |A_n^k x| = \left |\sum\limits_{j=0}^k\binom{k}{j}(-n)^j( n^2R(n,A) )^{k-j}x\right|\leq \left ( \sum\limits_{j=0}^k\binom{k}{j}n^j n^{k-j} \right )  \cdot u = (2n)^k \cdot u .
 \end{equation}
Now, fix $n \in \NN$ and $0< \varepsilon <1$. Then, by using \eqref{formulaTn} and \eqref{ank}, for all $h \in [0, \varepsilon \cdot e^{-2n}]$ we estimate
\begin{align*}
\left |\frac{T_n(h)x-x}{h}-A_n x \right| &=  h \cdot \left |  \sum\limits_{k=2}^\infty  \dfrac{h^{k-2} \cdot A_n^{k}x}{ k! } \right| \leq   h \cdot   \sum\limits_{k=2}^\infty  \dfrac{ \left |A_n^{k}x\right|}{ k! }  \leq   h \cdot    \left  (\sum\limits_{k=2}^\infty  \dfrac{(2n)^{k}}{ k! } \right) \cdot u \leq \varepsilon \cdot u.
\end{align*}
 This proves (ii) and shows that the orbit map $t \mapsto T_n(t)x$ is ru-differentiable and hence, by \Cref{diff to cont}, it is also ru-continuous, i.e., $(T_n(t))_{t\geq 0}$ is an ruc-semigroup on $X$.

Finally, assertion (iii) follows from formula \eqref{formulaTn} and the fact that $A_n$ and $A_m$ commute for all $n,m \in \NN$.
\end{proof}

We now proceed with the proof of the left implication of \Cref{Hille-Yosida}  which we  divide into three steps: 
\begin{itemize}
\item [{Step 1}:] By using the semigroups $(T_n(t))_{t \geq 0}$ defined in \Cref{Approximation semigroups}, for each $y \in D(A)$ and $t \geq 0$ we define $T(t)y$ as the ru-limit of $T_n(t)y$ as $n\to\infty$ and extend this definition to $X$.
\item [{Step 2}:] We show that $(T(t))_{t \geq 0}$ is an exponentially order bounded relatively uniformly continuous positive semigroup with order exponent 0.
\item [{Step 3}:] We prove that $A$ is the generator of $(T(t))_{t \geq 0}$.
\end{itemize}

\begin{proof}[Proof of \Cref{Hille-Yosida}, Step 1] Consider the Yosida approximants $A_n$ and the corresponding semigroups $(T_n(t))_{t \geq0}$ as defined in \Cref{Approximation semigroups}. Fix $x \in X$. Since $A$ is ru-densely defined, there exists a sequence $(x_k)_{k \in \NN} \subset D(A)$ such that $x_k \goesru x$. Take any such  sequence.

 We show first that for each $t \geq 0$ and $k \in \NN$ the sequence $(T_n(t)x_k)_{n \in \NN}$ is a relatively uniform Cauchy sequence.
By \Cref{resolvent convergence}.(ii), there exists $\tilde u \in X$ such that for each $\varepsilon >0$ there exists $N \in \NN$ such that
$$\left|A_n x_k -A_m x_k \right| \leq \left|n AR(n,A)x_k -Ax_k \right| + \left| Ax_k - m AR(m,A)x_k \right|
\leq \varepsilon \cdot (2 \tilde u)$$ holds for all $n , m,  k \geq N$. 
Furthermore, by \Cref{Approximation semigroups}.(i), there exist $\tilde w, v \in X$ such that for all $n \in \NN$, $t \geq 0$ we have
 $T_n(t) (2 \tilde u) \leq \tilde w $ and $T_n(t)\tilde w \leq v.$
Since $T_n(t)$ and $T_m(t)$ are positive operators, we estimate \begin{equation}\label{cauchy semigroup}
|T_m(t-\tau)T_n(\tau)(A_nx_k-A_mx_k)| \leq \varepsilon \cdot v \end{equation} 
for all $n , m,  k \geq N$ and $\tau \in [0, t]$. 
Moreover, by \Cref{Approximation semigroups}.(iii), for each $n,m \in \NN$ and $t,s \geq 0$ the operators $T_n(t),T_m(s)$ commute and hence, by \Cref{product rule for comm semigroups}, \eqref{cauchy semigroup}, and \Cref{integral properties}.(iii), for each $\varepsilon >0$ there exists $N \in \NN$ such that
\begin{equation*}
\begin{split}
|T_n(t)x_k-T_m(t)x_k|&= \left |\int_0^t  T_m(t-\tau)T_n(\tau)(A_nx_k-A_mx_k) \  \textup{d}\tau \right| \leq t \cdot \varepsilon \cdot v
\end{split}
\end{equation*}
holds for all  $n,m, k \geq N$ and $t \geq 0$. This proves that for each $k \geq N$ and  $t \geq 0$  the sequence $(T_n(t)x_k)_{n \in \NN}$ is a relatively uniform Cauchy sequence and hence, it has a limit which we denote by $ T(t)x_k$. Furthermore, there exists $v \in X$ such that for each $\varepsilon >0$ there exists $N \in \NN$ such that
\begin{equation}\label{approximation semigroup identity 2}
\begin{split}
|T_n(t)x_k-T(t)x_k| \leq t \cdot \varepsilon \cdot v
\end{split}
\end{equation}
for all  $n ,k\geq N$ and $t \geq 0$.
In particular, for each $t > 0$ and $\varepsilon >0$ there exists $\tilde N \in \NN$ such that 
\begin{equation}\label{approximation semigroup identity 1}
|T_n(t)x_k-T(t)x_k| \leq t \cdot \dfrac{\varepsilon}{t} \cdot v = \varepsilon \cdot v 
\end{equation}
holds for all $n,k \geq \tilde N$.

Next, we prove that $(T(t) x_k )_{k \in \NN}$ is a relatively uniform Cauchy sequence for each $ t \geq 0$. Assume that $x_k \goesru x$ with respect to a regulator $u$. By \Cref{Approximation semigroups}.(i), there exists $\tilde v \in X$ such that $T_n(t)u \leq \tilde v$ for all $n \in \NN$, $t \geq 0$ and hence, by \eqref{approximation semigroup identity 1}, for each $\varepsilon >0$ and  $t \geq 0$ there exists $ \tilde N \in \NN$ such that
\begin{align*}
 |T(t) x_k - T(t)x_m| &\leq |T(t) x_k - T_n(t)x_k| + T_n(t)| x_k - x_m|
 +  |T_n(t) x_m - T(t)x_m|\\
& \leq \varepsilon \cdot  ( v + T_n(t) u+   v) \leq \varepsilon \cdot  (2 v + \tilde v)
\end{align*}
holds for all $k,m \geq \tilde N$.
Hence, for each $t \geq 0$ the sequence $(T(t) x_k )_{k \in \NN}$ is a relatively uniform Cauchy sequence and it has a limit which we denote by $T(t)x$. Furthermore, there exists $\tilde{w} \in X$ such that for each $t \geq 0$ and $\varepsilon >0 $ there exists $ \tilde N \in \NN$ such that 
\begin{equation}\label{approximation semigroup identity 3}
|T(t) x_k-T(t)x| \leq \varepsilon \cdot \tilde{w}
\end{equation}
 holds for all  $k\geq  \tilde  N$.
As in the Banach space case, it is not difficult to verify that the limit $T(t)x$ is independent of the choice of $( x_k )_{k \in \NN}$.
\end{proof}
\begin{proof}[Proof of \Cref{Hille-Yosida}, Step 2] 
Since positivity and semigroup property are preserved under ru-limits,  $(T(t))_{t \geq 0}$ is a positive semigroup. We now show that it is exponentially order bounded with order exponent 0. To this end, fix $x \in X$ and pick any sequence $( x_k )_{k \in \NN} \subset D(A)$ such that $x_k \goesru x$ with respect to a regulator $u \in X$.  Then, by \eqref{approximation semigroup identity 3}, \eqref{approximation semigroup identity 1}  and \Cref{Approximation semigroups}.(i), there exists $v_1, v_2, v_3 \in X$ such that for each $t \geq 0$ there exists $N \in \NN$ such that 
$$ |T(t)x-T(t)x_N| \leq v_1 ,\quad|T(t)x_N -T_N(t)x_N| \leq v_2, \quad |x_N| \leq u +|x|, \quad  T_N(t)(u +|x|) \leq  v_3  $$ 
hold and hence, 
 \begin{align*}
 |T(t)x| &\leq |T(t)x-T(t)x_N| +|T(t)x_N -T_N(t)x_N| +T_N(t)|x_N|
 \\
  & \leq   v_1 + v_2+T_N(t)(u +|x|)  \leq v_1 + v_2+v_3.
 \end{align*} 
This proves that $(T(t))_{t \geq 0}$ is exponentially order bounded with order exponent 0.

It remains to prove that $(T(t))_{t \geq 0}$ is ru-continuous. By \Cref{ruc on dense to ruc on whole space}, it suffices to check that $T(h)y \goesru y$ as $h \searrow 0$ for each $y \in D(A)$. By the  same reasoning as in  the proof of  \eqref{approximation semigroup identity 2},  we  derive that there exists $w_1 \in X$ such that for fixed $ 0 < \tilde \varepsilon \leq 1$ there exists $\tilde N \in \NN$ such that $|T(h)y - T_{\tilde N}(h)y| \leq h \cdot \tilde \varepsilon \cdot w_1$ holds for all $h \geq 0$. Furthermore, since the semigroup $(T_{\tilde N}(t))_{t \geq 0}$ is ru-continuous there exists $  w_2 \in X$ such that for each $\varepsilon >0$ there exists $0< \delta< \varepsilon$ such that $ |T_{\tilde N}(h)y-y| \leq  \varepsilon \cdot   w_2$ holds for all $h \in [0 , \delta]$ and hence,
\[
\pushQED{\qed} 
 |T(h)y - y|  \leq |T(h)y - T_{\tilde N}(h)y| +|T_{\tilde N}(h)y-y|\leq h \cdot \tilde \varepsilon \cdot w_1 + \varepsilon \cdot  w_2  \leq \varepsilon \cdot (w_1+  w_2).  \qedhere
 \]   
\end{proof}
\begin{proof}[Proof of \Cref{Hille-Yosida}, Step 3] Let $B$ denote  the generator of $(T(t))_{t \geq 0}$. We show  that $A$ and $B$ coincide on $D(A)$ and that $ D(A) =D(B)$ which will conclude the proof. 

Fix $y \in D(A)$. As we mentioned in Step 2, from the proof of   \eqref{approximation semigroup identity 2} one can deduce that there exists $u_1 \in X$ such that  for each $\varepsilon >0$ there exist $N \in \NN$ such that
$$|T(h)y -T_N (h)y| \leq h \cdot \varepsilon \cdot u_1$$
holds for all $h \geq 0$ and $n \geq N$. Furthermore by \Cref{Approximation semigroups}.(ii) and \Cref{resolvent convergence}.(ii), there exist $u_2,u_3 \in X$ such that for each $\varepsilon >0$ there exist $M \geq N$ and $ \delta >0$ such that 
$$ \left |  \frac{T_M(h)y-y}{h}- A_M y\right |\leq \varepsilon \cdot u_2, \quad | A_M y -Ay  |\leq \varepsilon \cdot u_3$$
hold for all $h \in [0, \delta]$ and hence, we obtain
\begin{align*}
\left |   \frac{T(h)y-y}{h}- Ay\right | & \leq \left |   \frac{T(h)y -T_M (h)y}{h}\right |+\left |  \frac{T_M(h)y-y}{h}- A_M y\right | +| A_M y -Ay  | \\
& \leq \frac{h\cdot \varepsilon \cdot u_1}{h} + \varepsilon \cdot u_2 + \varepsilon \cdot u_3 \leq  \varepsilon \cdot (u_1+u_2+u_3).
\end{align*} 
This proves that $D(A) \subset D(B)$ and that $A$ coincides with $B$ on $D(A)$. 

To prove  $D(B) \subset D(A)$ fix $x \in D(B)$. Since $B$ is the generator of an exponentially order bounded semigroup with order exponent 0, by \Cref{resolvent properties}.(iii), we have $1 \in \rho_+(A)\cap \rho_+(B)$ and hence, $(I_X-A)$ and $(I_X-B)$ are bijective operators. Thus, there exists $y \in D(A)$ such that $(I_X-B)x =(I_X-A)y$. Since $(I_X-A)$ and $(I_X-B)$ coincide on $D(A)$ we obtain $(I_X-B)x =(I_X-B)y$ and hence, we have $x=y$. This proves that $x \in D(A)$.
\end{proof}

By applying \Cref{rescaled} we directly obtain a generalization of \Cref{Hille-Yosida} for exponentially order bounded ruc-semigroups of any order exponent.
\begin{cor}
Let $X$ be an ru-complete vector lattice with the property $(D)$.
For $w \in \RR$ the following assertions are equivalent.
\begin{itemize}
\item [\textup{(i)}] The operator $A$ is the generator of an  exponentially order bounded relatively uniformly continuous positive semigroup with order exponent $w $.
\item [\textup{(ii)}] The operator $A$ is ru-closed, ru-densely defined, $\mathbb{C}_{>w} \subset \rho_+ (A)$ and for each $x \in X$ there exists $u \in X$ such that $$ |R(\lambda,A)^k x | \leq (\real \lambda -w)^{-k} \cdot u$$ holds for all $k \in \NN$ and $\lambda \in \CC_{>w}$.
\end{itemize}
\end{cor}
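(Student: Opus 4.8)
The plan is to reduce both implications to the already-proved case of order exponent $0$, \Cref{Hille-Yosida}, by means of the single rescaling $S(t):=e^{-wt}T(t)$ (respectively $T(t):=e^{wt}S(t)$), tracking how every hypothesis transforms under the operator substitution $B:=A-wI_X$. The entire bookkeeping is supplied by \Cref{rescaled}: applied with $\alpha=1$ and $\mu=-w$ it tells us that $B=A-wI_X$ has $D(B)=D(A)$ and $R(\lambda,B)=R(\lambda+w,A)$, and that rescaling shifts the order exponent by $\mu$. So there is no new analytic content; the work is in aligning the two shifts correctly.

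For the implication (i) $\Rightarrow$ (ii) I would proceed as follows. Assume $A$ generates an exponentially order bounded ruc positive semigroup $(T(t))_{t\geq 0}$ with order exponent $w$, and set $S(t):=e^{-wt}T(t)$. By \Cref{rescaled}, $(S(t))_{t\geq 0}$ is a positive ruc-semigroup with generator $B=A-wI_X$, $D(B)=D(A)$, and order exponent $0$, so \Cref{Hille-Yosida} yields that $B$ is ru-closed, ru-densely defined, $\CC_{>0}\subset\rho_+(B)$, and satisfies the resolvent power estimate with $w$ replaced by $0$. I would then translate each conclusion back to $A$: ru-closedness and ru-dense definedness are invariant under the shift by $wI_X$, since if $x_n\goesru x$ and $Ax_n\goesru y$ then $Bx_n\goesru y-wx$, whence ru-closedness of $B$ forces $x\in D(B)=D(A)$ and $Ax=Bx+wx=y$, while equality of the domains makes ru-density immediate. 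From $R(\lambda,B)=R(\lambda+w,A)$ one gets $\lambda\in\rho_+(B)$ \Iff $\lambda+w\in\rho_+(A)$, so $\CC_{>0}\subset\rho_+(B)$ becomes $\CC_{>w}\subset\rho_+(A)$; and the change of variable $\mu=\lambda+w$, for which $\real\lambda=\real\mu-w$, turns $|R(\lambda,B)^k x|\leq(\real\lambda)^{-k}u$ into $|R(\mu,A)^k x|\leq(\real\mu-w)^{-k}u$ for all $\mu\in\CC_{>w}$, which is precisely \eqref{resolvent equ} shifted by $w$.

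For the implication (ii) $\Rightarrow$ (i) I would run the argument in reverse. Starting from an operator $A$ satisfying (ii), define $B:=A-wI_X$ on $D(B):=D(A)$ and verify directly that $B$ satisfies hypothesis (ii) of \Cref{Hille-Yosida} with order exponent $0$: for $\lambda\in\CC_{>0}$ we have $\lambda+w\in\CC_{>w}\subset\rho_+(A)$, so $(\lambda-B)=((\lambda+w)-A)$ is invertible with positive inverse $R(\lambda,B)=R(\lambda+w,A)$, giving $\CC_{>0}\subset\rho_+(B)$; ru-closedness and ru-dense definedness of $B$ follow from those of $A$ exactly as above; and the same substitution $\mu=\lambda+w$ converts the hypothesized estimate into $|R(\lambda,B)^k x|\leq(\real\lambda)^{-k}u$. \Cref{Hille-Yosida} then produces an exponentially order bounded ruc positive semigroup $(S(t))_{t\geq 0}$ with generator $B$ and order exponent $0$. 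Finally, setting $T(t):=e^{wt}S(t)$ and invoking \Cref{rescaled} once more, now with $\mu=w$, gives a positive ruc-semigroup with generator $B+wI_X=A$ and order exponent $w$, establishing (i).

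Since every step is controlled by \Cref{rescaled} together with the $w=0$ case, the only point demanding real care—and the place where a sign error would silently break the statement—is the consistent pairing of the additive shift $+w$ in the spectral parameter with the subtractive shift $-wI_X$ in the operator, so that the half-planes $\CC_{>0}$ and $\CC_{>w}$ and the growth factors $(\real\lambda)^{-k}$ and $(\real\mu-w)^{-k}$ match up exactly. I expect this accounting, rather than any substantive estimate, to be the main thing to get right.
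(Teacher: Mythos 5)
Your proposal is correct and follows exactly the route the paper intends: the paper's entire proof of this corollary is the single remark that it follows ``by applying \Cref{rescaled}'' to \Cref{Hille-Yosida}, and your argument is precisely that rescaling with $\mu=-w$, $\alpha=1$ (and its inverse), with the bookkeeping of domains, resolvents, half-planes, and growth factors carried out correctly. You have simply written out the details the paper leaves implicit.
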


We conclude by showing that every exponentially order bounded positive ruc-semigroup is uniquely determined by its generator.

\begin{prop}\label{uniqueness}
Let $X$ be an ru-complete vector lattice with the property $(D)$.
Every exponentially order bounded relatively uniformly continuous positive semigroup on $X$ is uniquely determined by its generator.
\end{prop}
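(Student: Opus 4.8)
The plan is to reduce the statement to the case of two exponentially order bounded ruc positive semigroups $(T(t))_{t\ge0}$ and $(S(t))_{t\ge0}$ sharing one generator $A$, and to show that each of them is recovered from $A$ as the relatively uniform limit of the Yosida approximation semigroups. First I would invoke \Cref{rescaled}: choosing $\mu\in\RR$ large enough, the rescaled semigroups $e^{-\mu t}T(t)$ and $e^{-\mu t}S(t)$ both have order exponent $0$ and share the generator $A-\mu I$, so it suffices to treat the case where both semigroups have order exponent $0$. By \Cref{generator properties} the common generator $A$ is ru-closed and ru-densely defined, and by \Cref{resolvent properties}.(ii)--(iii) it satisfies $\CC_{>0}\subset\rho_+(A)$ together with the resolvent estimate required in \Cref{resolvent convergence}. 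Hence the Yosida approximants $A_n=n^2R(n,A)-nI$ and the semigroups $(T_n(t))_{t\ge0}$ from \Cref{Approximation semigroups} are available, and crucially they depend only on $A$.

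The key step is to show that $T_n(t)x\goesru T(t)x$ as $n\to\infty$ for every $x\in D(A)$ and $t\ge0$. To apply the product rule \Cref{product rule for comm semigroups} to the pair $(T,T_n)$ I first need commutativity. By \Cref{resolvent properties}.(iii) the resolvent $R(n,A)=\int_0^\infty e^{-nt}T(t)\ \textup{d}t$ is the Laplace transform of $(T(t))_{t\ge0}$; using the semigroup law together with \Cref{integral properties}.(iv) one checks that $R(n,A)$ commutes with each $T(s)$, and consequently $A_n$ and $T_n(t)=\sum_{k\ge0}\tfrac{t^k}{k!}A_n^k$ commute with every $T(s)$. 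Since $A_n$ is everywhere defined, $D(A)\subset D(A_n)=X$, so \Cref{product rule for comm semigroups} yields, for $x\in D(A)$,
\[
T(t)x-T_n(t)x=\int_0^t T(t-\tau)T_n(\tau)(A_n-A)x\ \textup{d}\tau.
\]

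It then remains to show that the right-hand side converges relatively uniformly to $0$. For $x\in D(A)$ one has $A_nx=nR(n,A)Ax$, hence $(A_n-A)x=(nR(n,A)-I)Ax$, and applying \Cref{resolvent convergence}.(i) to the constant sequence equal to $Ax$ gives $nR(n,A)Ax\goesru Ax$, so $(A_n-A)x\goesru0$, say with regulator $r\ge0$. Given $\varepsilon>0$, pick $N$ with $|(A_n-A)x|\le\varepsilon\cdot r$ for $n\ge N$. By positivity and \Cref{Approximation semigroups}.(i) there is $p\in X$ with $T_n(\tau)r\le p$ for all $n$ and $\tau$, and since $(T(t))_{t\ge0}$ is order bounded with exponent $0$ there is $q\in X$ with $T(s)p\le q$ for all $s\ge0$; hence the integrand is dominated by $\varepsilon\cdot q$ uniformly in $\tau\in[0,t]$ and $n\ge N$, and \Cref{integral properties}.(iii) gives $|T(t)x-T_n(t)x|\le t\varepsilon\cdot q$. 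Thus $T(t)x=\textup{ru-}\lim_n T_n(t)x$ for all $x\in D(A)$.

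Because the semigroups $(T_n(t))_{t\ge0}$ are built solely from $A$, the same limit represents $S$, so $T(t)$ and $S(t)$ agree on $D(A)$. To pass to all of $X$, for arbitrary $x\in X$ I would choose $(x_k)_{k\in\NN}\subset D(A)$ with $x_k\goesru x$ (possible since $A$ is ru-densely defined); as positive operators preserve ru-convergence, $T(t)x_k\goesru T(t)x$ and $S(t)x_k\goesru S(t)x$, while $T(t)x_k=S(t)x_k$ for every $k$, so uniqueness of ru-limits (\Cref{ru convergence}.(i)) forces $T(t)x=S(t)x$. The main obstacle is establishing that the approximation semigroups commute with the given semigroup --- which rests on identifying the resolvent as the Laplace transform --- together with securing the bound on the integrand that is uniform in both $n$ and $\tau$.
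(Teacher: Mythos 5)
Your proposal is correct and follows essentially the same route as the paper's proof: rescale to order exponent $0$, use the Laplace-transform representation of the resolvent to get commutativity of the given semigroup with the Yosida approximation semigroups $(T_n(t))_{t\ge 0}$, apply \Cref{product rule for comm semigroups} together with a bound on the integrand uniform in $n$ and $\tau$ to conclude $T_n(t)y\goesru T(t)y$ for $y\in D(A)$, and extend by ru-density. The only cosmetic difference is that the paper compares the given semigroup directly with the semigroup constructed in Step~1 of the proof of \Cref{Hille-Yosida}, whereas you compare two arbitrary semigroups with the same generator each to the $T_n$'s; the substance is identical.
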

\begin{proof}
By a simple rescaling argument, see \Cref{rescaled}, 
we may assume that $(S(t))_{t\geq 0}$ is an exponentially order bounded positive ruc-semigroup with order exponent 0. We will prove that $(S(t))_{t\geq 0}$ coincides with the semigroup $(T(t))_{t\geq 0}$ which was constructed in Step 1 of the proof of the backward implication in \Cref{Hille-Yosida}. 

Assume that $A$ is the generator of $(S(t))_{t\geq 0}$. By \Cref{resolvent properties}, the resolvent set $\rho_+ (A)$ contains  $\CC_{>0}$ and we have 
\begin{equation}\label{commute with resolvent}
R(n, A)x= \int_0^\infty e^{-n \cdot t  }S(t)x \ \textup{d}t
\end{equation}
for each $n \in \NN$ and $x \in X$. Furthermore, by \Cref{resolvent properties}.(ii) and \Cref{generator properties}, $A$ satisfies the assumptions of \Cref{resolvent convergence} and hence, by \Cref{Approximation semigroups}, for each $n \in \NN$
 the operator
$$A_n := n^2 R(n,A) - n I_X = n AR(n,A)$$
on $X$ is the generator of the exponentially order bounded  positive ruc-semigroup $(T_n(t))_{t\geq 0}$ with order exponent 0. 

Fix $y \in D(A)$. By \Cref{resolvent convergence}.(ii), there exists $u \in X$ such that for each $\varepsilon >0$ there exists $N \in \NN$ such that $$\left|A_ny -Ay \right| \leq \varepsilon \cdot u$$ for all $n  \geq N$. Furthermore, by assumption and \Cref{Approximation semigroups}.(i), there exist $w, v \in X$ such that $S(t)w \leq v$ and $T_n(t)u \leq w$  
for all $n \in \NN$, $t \geq 0$. Hence, for each $t \geq0$ we have \begin{equation}\label{cauchy semigroup 2}
|S(t-\tau)T_n(\tau)(A_ny-Ay)| \leq \varepsilon \cdot v \end{equation} 
for all $n \geq N$ and $\tau \in [0, t]$. 

By identity \eqref{commute with resolvent}, the operators $S(t)$  and $A_n$ commute for each $n \in \NN$, $t \geq 0$ and hence, by \eqref{formulaTn},  the operators  $S(t)$ and $T_n(s)$ commute for each $t, s \geq 0$ and $n \in \NN$. Therefore, by \Cref{product rule for comm semigroups}, \eqref{cauchy semigroup 2}, and \Cref{integral properties}.(iii), for each $\varepsilon >0$ there exists $N \in \NN$ such that
\begin{equation*}
\begin{split}
|S(t)y-T_n(t)y|&= \left |\int_0^t  S(t-\tau)T_n(\tau)(A_n y-A y) \  \textup{d}\tau \right| \leq t \cdot \varepsilon \cdot v
\end{split}
\end{equation*}
holds for all $n\geq N$ and $t \geq 0$.
This proves that $T_n(t)y \goesru S(t)y$ as $n \rightarrow \infty$ and hence, $S(t)y=T(t)y$  for each $t \geq 0 $ and $y \in D(A)$. Since $D(A)$ is ru-dense in $X$ and $S(t)$ and $T(t)$ preserve relative uniform limits we obtain $S(t)x=T(t)x$ for every $x\in X$ and $t \geq 0 $.
\end{proof}

\section*{Acknowledgments} 
We would like to express our gratitude to Marko Kandi\' c for insightful  comments and meticulous proofreading which improved the manuscript. The first author would also like to thank Marko for professional guidance.

\bibliographystyle{abbrv}

\begin{thebibliography}{10}

\bibitem{Aliprantis:07}
C.~D. Aliprantis and R.~Tourky.
\newblock {\em Cones and duality}, volume~84 of {\em Graduate Studies in
  Mathematics}.
\newblock American Mathematical Society, Providence, RI, 2007.

\bibitem{Batkai:17}
A.~B\'atkai, M.~{Kramar~Fijav\v{z}}, and A.~Rhandi.
\newblock {\em Positive operator semigroups: From finite to infinite
  dimensions}, volume 257 of {\em Operator Theory: Advances and Applications}.
\newblock Birkh\"auser/Springer, Cham, 2017.

\bibitem{Engel:00}
K.-J. Engel and R.~Nagel.
\newblock {\em One-parameter semigroups for linear evolution equations}, volume
  194 of {\em Graduate Texts in Mathematics}.
\newblock Springer-Verlag, New York, 2000.

\bibitem{Hille:48}
E.~Hille.
\newblock {\em Functional {A}nalysis and {S}emi-{G}roups}.
\newblock American Mathematical Society Colloquium Publications, vol. 31.
  American Mathematical Society, New York, 1948.

\bibitem{Kandic:18}
M.~Kandi\'c and M.~Kaplin.
\newblock Relatively uniformly continuous semigroups on vector lattices.
\newblock 2018.
\newblock arXiv:1807.02543.

\bibitem{Luxemburg:67}
W.~A.~J. Luxemburg and L.~C. Moore, Jr.
\newblock Archimedean quotient {R}iesz spaces.
\newblock {\em Duke Math. J.}, 34:725--739, 1967.

\bibitem{LuxemburgZaanen:71}
W.~A.~J. Luxemburg and A.~C. Zaanen.
\newblock The linear modulus of an order bounded linear transformation. {I}.
\newblock {\em Indagationes Mathematicae (Proceedings)}, 74, 1971.

\bibitem{Luxemburg:71}
W.~A.~J. Luxemburg and A.~C. Zaanen.
\newblock {\em Riesz spaces. {V}ol. {I}}.
\newblock North-Holland Publishing Co., Amsterdam-London; American Elsevier
  Publishing Co., New York, 1971.
\newblock North-Holland Mathematical Library.

\bibitem{Moore:68}
L.~C. Moore, Jr.
\newblock The relative uniform topology in {R}iesz spaces.
\newblock {\em Nederl. Akad. Wetensch. Proc. Ser. A 71 = Indag. Math.},
  30:442--447, 1968.

\bibitem{Peressini:67}
A.~L. Peressini.
\newblock {\em Ordered topological vector spaces}.
\newblock Harper \& Row, Publishers, New York-London, 1967.

\bibitem{Rudin:76}
W.~Rudin.
\newblock {\em Principles of mathematical analysis}.
\newblock International series in pure and applied mathematics. McGraw-Hill,
  3rd ed edition, 1976.

\bibitem{Taylor:19}
M.~A. Taylor and V.~G. Troitsky.
\newblock Bibasic sequences in {B}anach lattices.
\newblock 2019.
\newblock arXiv:1907.07589.

\bibitem{Vulikh:67}
B.~Z. Vulikh.
\newblock {\em Introduction to the theory of partially ordered spaces}.
\newblock Translated from the Russian by Leo F. Boron, with the editorial
  collaboration of Adriaan C. Zaanen and Kiyoshi Is\'eki. Wolters-Noordhoff
  Scientific Publications, Ltd., Groningen, 1967.

\bibitem{Yosida:48}
K.~Yosida.
\newblock On the differentiability and the representation of one-parameter
  semi-group of linear operators.
\newblock {\em J. Math. Soc. Japan}, 1:15--21, 1948.

\end{thebibliography}

\end{document}